\newcommand{\comment}[1]{}
\def\hhmm{\number\hh:\ifnum\mm<10{}0\fi\number\mm}
\newlength\templ
\theoremstyle{definition} 
\newtheorem{theorem}{Theorem}
\newtheorem{prop}[theorem]{Proposition}
\newtheorem{lemma}[theorem]{Lemma}
\newtheorem*{defi}{Definition}
\newtheorem*{ack}{Acknowledgement}
\newtheorem{cor}[theorem]{Corollary}
\theoremstyle{remark} 
\newtheorem*{nota}{Notation}
\newcommand\ann{\mathrm{Ann}}
\newcommand\sep{\,:\,}
\newcommand\st {\mathrel{\ooalign{$\,\backepsilon$\cr\lower .7pt\hbox{\kern 1pt$-\,$}}}}
\newcommand\qf[1]{\mathrm{Quot}(#1)}	
\newcommand\spec{\mathrm{Spec}\,}	
\newcommand\sper{\mathrm{Sper}\,}	
\newcommand\supp{\mathrm{supp}}
\newcommand\minspec{\mathrm{MinSpec}\,}
\newcommand{\N}{\mathbb{N}}
\newcommand{\p}{\mathfrak{p}}
\newcommand\wo[1]{\backslash{\{#1\}}}
\newcommand{\bij}{\rightarrowtail \hspace{-1.9ex} \rightarrow}
\newcommand*{\lhrarrow}{\ensuremath{\lhook\joinrel\relbar\joinrel\rightarrow}}
\newcommand*{\thrarrow}{\twoheadrightarrow}
\newcommand*{\lthrarrow}{\ensuremath{\relbar\joinrel\twoheadrightarrow}}
\renewcommand\mod{\,\mathrm{mod}\,}
\newcommand{\ic}{\mathrm{ic}}
\def\blacksq{\begin{flushright}$\blacksquare$\end{flushright}}
\def\tboxit#1#2{{\setbox0=\hbox{\kern5pt#1\kern5pt}\edef\titlewidth{\the\wd0}%
                \setbox2=\vbox{#2} 
                \setbox2=\vbox{%
                    \vbox to1pt{\vss\hbox 
to\wd2{\strut\hfil#1\hfil}\vskip0pt}%
                    \box2 
                    } 
                \Tboxit{\titlewidth}{\Tboxit{\titlewidth}{\box2}}}} 
\def\Tboxit#1#2{\vbox{%
    \setbox0=\hbox{\vrule\kern3pt\vbox{\kern3pt#2\kern3pt}\kern3pt\vrule}%
    \hbox to\wd0{\hrulefill\kern#1\hrulefill}\nointerlineskip 
    \box0 
    \hrule 
    }}
\begin{document}

\title{Real closed $*$ reduced partially ordered Rings}
\makeatletter
\let\mytitle\@title
\makeatother
\author{Jose Capco \\
Email: \href{mailto:jcapco@yahoo.com}{\small{jcapco@yahoo.com}} 
}
\date{}
\thispagestyle{empty}
\maketitle

\pagestyle{fancy}
\fancyhead[R]{\mytitle}
\fancyhead[L]{J. Capco}
\tolerance=500

\begin{abstract}
In this work we attempt to generalize our result in \cite{Capco3} \cite{Capco4} for real rings (not just von Neumann regular real rings). 
In other words
we attempt to characterize and construct real closure $*$ of commutative unitary rings that are real. 
We also make some very interesting and significant discoveries regarding maximal partial orderings of rings, Baer rings and essentail 
extension of rings.
The first Theorem itself gives us a noteworthy bijection between maximal partial orderings of two rings by which one is a 
rational extension of the other. We characterize conditions when a Baer reduced ring can be integrally closed in its
total quotient ring. We prove that Baer hulls of rings have exactly one automorphism (the identity) and we even prove
this for a general case (Lemma \ref{deck_transform_Baer}). Proposition \ref{Prop_essext} allows us to study essential
extensions of rings and their relation with minimal prime spectrum of the lower ring. And Theorem 
\ref{adjoin_idemp} gives us a construction of the real spectrum of a ring generated by adjoining idempotents to a 
reduced commutative subring (for instance the construction of Baer hull of reduced commutative rings).

From most of the above interesting theories we prove that there is a bijection between the real closure $*$ 
of real rings (upto isomorphisms) and their maximal partial orderings.
We then attempt to develop some topological theories for the set of real closure $*$ of real rings (upto isomorphism)
that will help us give a topological characterization in terms of the real and prime spectra of these rings. The topological
characterization will be revealed in a later work. It is noteworthy to point out that we can allow ourself to 
consider mostly the minimal prime spectrum of the real ring in order to develop our topological theories.

\begin{description}
\item[Mathematics Subject Classification (2000):] Primary 13J25; Secondary 06E15, 16E50
\item[Keywords:] real closed $*$ rings, regular rings, absolutes of Hausdorff spaces, irreducible surjections, $f$-ring
partial orderings,total quotient ring, maximal partial ordering of rings, essential extensions of rings.
\end{description}
\end{abstract}

\footnote{Supported by Universit\"at Passau, Passau, Germany and Magna-Steyr, St. Valentin, Austria}

\begin{nota}
If $A$ is a commutative unitary ring, then we write $T(A)$ to mean the total quotient ring of $A$. And if
$A$ is partially ordered, with a partial ordering $A^+$, 
then we automatically assume a default partial ordering of $T(A)$, and unless otherwise defined we 
write $T(A)^+$ for it, which is the weakest partial ordering of $T(A)$ that extends $A^+$. In other words
$$T(A)^+ :=\{\sum_{i=1}^n a_it_i^2 : n\in \N,a_i\in A^+, t_i\in T(A), i=1,\dots,n\}$$
\end{nota}

\begin{theorem} \label{maxpo} Let $A$ be a subring of a reduced commutative ring $B$, suppose also that $B$ is a rational extension of $A$.
\item[(i)] Suppose that $B$ has a maximal partial ordering $B^+$, then the partial ordering $B^+\cap A$ is also a maximal
partial ordering of $A$.

\item[(ii)] There is a bijection $\Phi : \mathcal P_B \bij \mathcal P_A$, where $\mathcal P_B$ is the set of all maximal 
partial orderings of $B$ and $\mathcal P_A$ is the set of all maximal partial orderings of $A$. If we have a fixed partial ordering
of $A$, say $A^+$, and if $B^+$ is the weakest partial ordering of $B$ extending $A^+$ i.e.
$$B^+ :=\{\sum_{i=0}^n a_ib_i^2 \sep a_i\in A^+, b_i\in B\} $$
then we can similarly prove that there is a bijection between the set of maximal partial orderings of $B$ containing $B^+$ 
and the set of maximal partial ordering of $A$ containing $A^+$.
\end{theorem}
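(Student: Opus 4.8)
The plan is to reduce both parts to a single ``clearing of denominators'' mechanism that exploits the rationality of the extension together with the reducedness of $B$, and then to harvest (i) and (ii) by routine manipulations with maximality and Zorn's lemma. For a partial ordering $P$ of a subring write $P^B:=\{\sum_i p_ib_i^2: p_i\in P,\ b_i\in B\}$ for the weakest partial ordering of $B$ that it generates. The first step is a lemma: \emph{if $P$ is a partial ordering of $A$, then $P^B$ is a partial ordering of $B$.} Closure under $+$, $\cdot$ and containment of squares are clear, so the point is $P^B\cap-P^B=\{0\}$. Given $0\neq c\in P^B\cap-P^B$, write $c=\sum p_ib_i^2$ and $-c=\sum p_j'(b_j')^2$; iterating the defining property of a rational extension I can find a single $a\in A$ with $ac\neq0$ and with $ab_i,ab_j'\in A$ for all $i,j$. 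Then $a^2c=\sum p_i(ab_i)^2\in P$ and $-a^2c=\sum p_j'(ab_j')^2\in P$, so $a^2c\in P\cap-P=\{0\}$; hence $a^2c=0$, so $(ac)^2=a^2c^2=c\cdot(a^2c)=0$, so $ac=0$ because $B$ is reduced, contradicting $ac\neq0$.

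For (i), let $B^+$ be a maximal partial ordering of $B$, put $A^+:=B^+\cap A$ (a partial ordering of $A$), and let $Q\supseteq A^+$ be any partial ordering of $A$; I must show $Q=A^+$. The device is to amalgamate $B^+$ and $Q$: I would consider $C:=\{\sum_i u_iq_ib_i^2: u_i\in B^+,\ q_i\in Q,\ b_i\in B\}$, which is closed under $+$ and $\cdot$, contains all squares, and contains both $B^+$ and $Q$. If $C$ is a partial ordering of $B$, then $C\supseteq B^+$ and maximality of $B^+$ force $C=B^+$, hence $Q\subseteq C\cap A=B^+\cap A=A^+$, as wanted. That $C\cap-C=\{0\}$ is proved by the same trick: from $0\neq c\in C\cap-C$, say $c=\sum u_iq_ib_i^2=-\sum u_j'q_j'(b_j')^2$, choose $a\in A$ by rationality with $ac\neq0$ and $au_i,ab_i,au_j',ab_j'\in A$; then $a^2u_i=a(au_i)\in A$ and $a^2u_i\in B^+$, so $a^2u_i\in B^+\cap A=A^+\subseteq Q$, whence $a^4u_iq_ib_i^2=(a^2u_i)q_i(ab_i)^2\in Q$ and similarly for the primed terms; multiplying $c+(-c)=0$ by $a^4$ now exhibits $0$ as a sum of elements of the partial ordering $Q$, so each summand vanishes, so $a^4c=0$, so $ac=0$ since $B$ is reduced, a contradiction.

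Part (ii) then follows. Set $\Phi(B^+):=B^+\cap A$; this lies in $\mathcal P_A$ by (i). For surjectivity, given a maximal partial ordering $A^+$ of $A$, the lemma makes $(A^+)^B$ a partial ordering of $B$; extend it by Zorn's lemma to a maximal one $B^+$. Then $B^+\cap A\supseteq(A^+)^B\cap A\supseteq A^+$, and since $B^+\cap A$ is a partial ordering of $A$ containing the maximal one $A^+$, equality holds, i.e.\ $\Phi(B^+)=A^+$. For injectivity, if $B_1^+,B_2^+\in\mathcal P_B$ and $B_1^+\cap A=B_2^+\cap A=:A^+$, then the set $C':=\{\sum_i u_iv_i: u_i\in B_1^+,\ v_i\in B_2^+\}$ is, by the argument just given (now $a^4u_iv_i=(a^2u_i)(a^2v_i)\in A^+\cdot A^+\subseteq A^+$), a partial ordering of $B$ containing both $B_1^+$ and $B_2^+$; maximality forces $B_1^+=C'=B_2^+$. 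Finally, the relative statement is proved by running the same three arguments while carrying the inclusions ``$\supseteq B^+$'' resp.\ ``$\supseteq A^+$'' along; here $B^+=(A^+)^B$ is a partial ordering by the lemma, and one simply restricts $\Phi$ and its inverse to the corresponding subsets.

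The heart of the matter, and the only place where a genuine idea is needed, is the claim that the amalgamated sets $C$ and $C'$ have trivial symmetric part; everything else is bookkeeping with maximality and Zorn's lemma. Two points will need care there: (a) that the defining property of a rational extension can be iterated so as to clear \emph{all} the finitely many denominators $b_i,u_i$ simultaneously while preserving $ac\neq0$ --- essentiality of the extension would not suffice for this, and rationality is used precisely here; and (b) the passage from $a^2c=0$ (or $a^4c=0$) back to $ac=0$, which is where reducedness of $B$ enters and which is what makes the contradiction with $ac\neq0$ bite.
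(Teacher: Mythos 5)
Your proposal is correct, and for the two non-routine steps it takes a genuinely different route from the paper. Where you prove (i) by amalgamating $B^+$ and a candidate enlargement $Q$ into the set $C=\{\sum u_iq_ib_i^2\}$ and verifying $C\cap-C=\{0\}$ by clearing denominators, the paper instead invokes Brumfiel's element-wise criterion for when a single element $a$ extends a partial ordering (Proposition 1.5.1), extracts a relation $ab_1+b_2=0$ with $b_1,b_2\in B^+\setminus\{0\}$ from the maximality of $B^+$, and runs a two-case analysis on whether $a_2ab_1$ vanishes. Likewise, for injectivity of $\Phi$ you form the product cone $C'$ generated by $B_1^+\cdot B_2^+$ and appeal to maximality twice, whereas the paper again takes a witness $b\in\tilde P_2\setminus\tilde P_1$, a relation $bb_1+b_2=0$, and a case split ending in the observation that a sum of two nonzero elements of a partial ordering cannot vanish; the surjectivity arguments coincide. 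Both proofs rest on exactly the two ingredients you flag at the end: iterated denominator-clearing, which genuinely requires rationality rather than mere essentiality of the extension, and reducedness to pass from $a^nc=0$ back to $ac=0$. What your route buys is uniformity --- a single ``trivial symmetric part'' computation serves your lemma, part (i), and injectivity --- and it makes explicit the fact, used only tacitly in the paper's surjectivity step, that the weakest extension $P^B$ of a partial ordering of $A$ is again a partial ordering of $B$. The paper's route is more local, never needing to check that a large generated cone is proper, but pays for this with the repeated case analyses.
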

\begin{proof} 
\item[(i)]
Set $A^+:=B^+\cap A$, if $A^+$ were not a maximal partial ordering of $A$ then there exists an element $a\in A\backslash A^+$
that extends $A^+$ to another partial ordering of $A$, this is equivalent to (one can easily prove this or find this in
, \cite{Brum} Proposition 1.5.1)
\begin{equation*}
a_1a + a_2=0  \quad \Leftrightarrow\quad aa_1,a_2 =0 \qquad (a_1,a_2\in A^+)
\tag{*}\end{equation*}

Now $a\not\in B^+$ and because $B^+$ is a maximal partial ordering of $B$, $a$ cannot extend $B^+$ as a partial ordering of $B$ i.e 
$$\exists b_1,b_2 \in B^+\wo{0} \st  ab_1 + b_2 = 0$$

By \cite{Brum} \S1.4 p.38 and the definition of $A^+$, there exists an $a_2\in A^2\wo{0}$ such that $a_2b_2\in A^+\wo{0}$. Then we have the 
following cases

\vspace{5mm}\underline{Case 1: $a_2ab_1 = 0$}. Then 
$$a_2(ab_1+b_2) = a_2b_2 \neq 0$$
but $ab_1 + b_2 = 0$, here we have a contradiction!

\vspace{5mm}\underline{Case 2: $a_2ab_1 \neq 0$}. Then there is an $a_1\in A^2$ such that 
$$a_1a_2ab_1, a_1a_2b_1 \in A\wo{0}$$  
this also shows that $a_1a_2b_1\in A^+\wo{0}$. We thus have the following 
$$ a_1a_2(ab_1+b_2)=(a_1a_2b_1)a+(a_1a_2b_2)=0$$
but $a_1a_2b_1, a_2a_2b_2\in A^+\wo{0}$ and by ($*$) $a_1a_2ab_1 =0$ which is a contradiction. 

\vspace{5mm}
\item[(ii)] For $\tilde P\in \mathcal P_B$ define $\Phi(\tilde P) := \tilde P \cap A$. By (ii), 
$\Phi(\tilde P)\in \mathcal P_A$, and we need only show that $\Phi$ is both surjective and injective. We prove by
contradiction, assume there are $\tilde P_1,\tilde P_2 \in \mathcal P_B$ such that 
$$\Phi(\tilde P_1)=\Phi(\tilde P_2) =: P \in \mathcal P_A \quad \textrm{ for some } P\in \mathcal P_A$$
If $\tilde P_1\neq \tilde P_2$ then there exists a $b\in \tilde P_2\backslash \tilde P_1$. Because $\tilde P_1$ is a maximal
partial ordering of $B$, we have (see for instance \cite{Brum} Proposition 1.5.1) some $b_1,b_2\in \tilde P_1\wo{0}$ such that 
$bb_1 + b_2 =0$. Without loss of
generality we assume also that $b_1\in B^2\subset \tilde P_1,\tilde P_2$, since we can always write
$b_1^2b + b_2b_1 = 0$ knowing that (our rings are reduced) $b_1^2, b_2b_1 \in \tilde P_1\wo{0}$. So there is an 
$a_2 \in A^2 \subset P$ such that $a_2b_2\in P\wo{0}$  (by the definition of $P$). We have the following cases

\vspace{5mm}\underline{Case 1: $a_2b_1b = 0$}.  Then 
$$a_2(b_1b + b_2) = a_2b_2 = 0$$
a contradiction. 

\vspace{5mm}\underline{Case 2: $a_2b_1b \neq 0$}. Then, there is an $a_1\in A^2 \subset P$ such that 
$a_1a_2b_1b \in P\wo{0}$.
But we also have 
$$a_1a_2(b_1b+b_2) = \underbrace{a_1a_2b_1b}_{\in P\wo{0}} + \underbrace{a_1a_2b_2}_{\in P\wo{0}} = 0$$
This is a contradiction, as $P$ is a partial ordering of $A$ (see for instance \cite{Brum} Proposition 1.2.1(b)).

\vspace{5mm} Thus we have shown that $\Phi$ is injective. Now to show that $\Phi$ is surjective, consider any 
$P\in \mathcal P_A$. Consider $\tilde P$ to be a partial ordering of $B$ that is maximal and contains the partial ordering
of $B$ defined by 
$$\{\sum_{i=1}^n b_i^2a_i \sep a_i \in P, b_i\in B, i=1,\dots,n\}$$
(for the case $A$ has a given partial ordering $A^+$ and $P$ contains this $A^+$. We see that $\tilde P$ contains 
the weakest partial ordering of $B$ extending $A^+$). We observe then that 
$$\Phi(\tilde P) =\tilde P \cap A \supset P$$ 
but $P$ being a maximal partial ordering of $A$ implies then that $\Phi(\tilde P) = P$ and so we have shown that $\Phi$ is
surjective.
\end{proof}

The Theorem above just enhanced \cite{Capco4} Theorem 20 and so we can write

\begin{theorem}\label{vNr_maxpo}
Let $A$ be a real, regular ring then there exists a bijection between the following sets
\begin{enumerate}
\item $\{C \sep C$ is a real closure $*$ of $A\}/\sim$ \\ where 
for any two real closure $*$ of $A$, $C_1$ and $C_2$, one defines $C_1\sim C_2$ iff there is an
$A$-poring-isomorphism  between $C_1$ and $C_2$
\item $\{X\subset \sper A \sep X \textrm{ is closed and } \supp_A|X: X\rightarrow \spec A 
\textrm{ is an irreducible surjection}\}$
\item $\{P\subset A \sep P\supset A^+ $ and $P$ is a maximal partial ordering of $A\}$
\item $\{C \sep C$ is a real closure $*$ of $B(A)\}/\sim$ \\ where 
for any two real closure $*$ of $B(A)$, $C_1$ and $C_2$, one defines $C_1\sim C_2$ iff there is a 
$B(A)$-poring-isomorphism between $C_1$ and $C_2$
\item $\{s:\spec B(A) \rightarrow \sper B(A) \sep s$ is a continuous section of $\supp_{B(A)} \}$
\item $\{X\subset \sper B(A) \sep X$ is closed and $\supp_{B(A)}|X: X\rightarrow \spec B(A)$ 
is an irreducible \mbox{surjection$\}$}
\item $\{P\subset B(A) \sep P\supset B(A)^+ $ and $P$ is an $f$-ring partial ordering of $B(A)\}$
\end{enumerate}
\end{theorem}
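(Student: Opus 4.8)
The plan is to establish the seven-fold bijection of Theorem \ref{vNr_maxpo} by combining the newly-strengthened Theorem \ref{maxpo} with the already-known results of \cite{Capco4} (Theorem 20 in particular) and the standard theory of real closures $*$ of von Neumann regular rings. The key observation that makes this work is that for a real regular ring $A$, its Baer hull $B(A)$ coincides with $A$ up to the relevant structure, or more precisely that passing from $A$ to $B(A)$ is a rational (indeed essential) extension, so that Theorem \ref{maxpo} applies directly to the pair $A \subset B(A)$. Since $A$ is regular it is already reduced and Baer, so in fact $B(A) = A$ in many treatments, but even allowing for conventions where one genuinely enlarges, the extension $A \hookrightarrow B(A)$ is rational and both rings are reduced and commutative, so Theorem \ref{maxpo}(ii) gives a bijection between $\mathcal P_{B(A)}$ and $\mathcal P_A$ compatible with prescribed base orderings $A^+$. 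This immediately identifies sets (3) and (7) once we know that, over a regular ring, ``maximal partial ordering'' and ``$f$-ring partial ordering'' coincide — which is precisely the content of the regular-ring case treated in \cite{Capco4}.

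The steps I would carry out, in order, are as follows. First, I would recall from \cite{Capco4} Theorem 20 that for a real regular ring $A$ the sets (4), (5), (6), (7) are in natural bijection with one another; this is the ``regular'' version and is cited as already established. Second, I would invoke the fact that $B(A)$ is obtained from $A$ by adjoining idempotents and that $A \subseteq B(A)$ is a rational extension of reduced commutative rings (using Proposition \ref{Prop_essext} and the construction underlying Theorem \ref{adjoin_idemp} to see essentiality, hence rationality). Third, I would apply Theorem \ref{maxpo}(ii) to the inclusion $A \subseteq B(A)$ to get the bijection $\mathcal P_{B(A)} \bij \mathcal P_A$, and its refinement respecting the base orderings $A^+$ and $B(A)^+$; this gives the bijection between (3) and (7). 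Fourth, I would transport the real-closure-$*$ correspondence across: a real closure $*$ of $A$ restricts/extends to one of $B(A)$ and vice versa — here one uses that any real closure $*$ of $A$ is a regular ring containing $A$, that adjoining the idempotents of $B(A)$ into it is harmless, and that the resulting ring is a real closure $*$ of $B(A)$; conversely a real closure $*$ of $B(A)$ is automatically one of $A$. This matches (1) with (4). Finally, the equivalences (1)$\leftrightarrow$(2) and (4)$\leftrightarrow$(6) follow from the standard dictionary between real closures $*$ and closed subsets of the real spectrum on which the support map is an irreducible surjection (the ``absolute'' / Stone-space picture referenced in the keywords), applied to $A$ and to $B(A)$ respectively; and (5)$\leftrightarrow$(6) is the elementary translation between continuous sections of $\supp_{B(A)}$ and their (closed) images.

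The main obstacle I anticipate is not any single hard estimate but the bookkeeping needed to verify that all these bijections are mutually compatible — i.e. that the square relating ``real closures $*$'' to ``maximal partial orderings'' commutes for $A$ and for $B(A)$ simultaneously, so that the composite identifications (1)$\cong$(3) and (4)$\cong$(7) are intertwined by the Theorem \ref{maxpo} bijection on the partial-ordering side and by the $A \rightsquigarrow B(A)$ extension on the closure side. Concretely, one must check that if $C$ is a real closure $*$ of $A$ with associated maximal partial ordering $P \subset A$, then the real closure $*$ of $B(A)$ built from $C$ has associated $f$-ring partial ordering equal to $\Phi^{-1}(P)$, where $\Phi$ is the map of Theorem \ref{maxpo}(ii). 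This is where one genuinely uses both the rational-extension hypothesis (to control how partial orderings restrict and extend between $A$ and $B(A)$) and the uniqueness/rigidity of Baer hulls (Lemma \ref{deck_transform_Baer}), which guarantees there is no ambiguity in the $B(A)$-algebra structure and hence the correspondence is canonical. Once this compatibility is pinned down, the seven sets collapse into one bijection class and the theorem follows.
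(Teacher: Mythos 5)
Your approach matches the paper's: the paper's ``proof'' of Theorem \ref{vNr_maxpo} is literally the single sentence ``The Theorem above just enhanced \cite{Capco4} Theorem 20,'' i.e.\ cite the prior result for the existing chain of bijections among (1), (2), (4), (5), (6), (7) and splice in set (3) via the new bijection $\mathcal P_{B(A)} \bij \mathcal P_A$ from Theorem \ref{maxpo}(ii) applied to the rational extension $A \subset B(A)$. Your reconstruction does exactly this, and the compatibility bookkeeping you describe at the end is the right thing to worry about.

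Two small corrections are in order, however. First, a commutative von Neumann regular ring need not be Baer (its Boolean algebra of idempotents need not be complete; for instance the Boolean ring of finite and cofinite subsets of $\N$ is regular but not Baer), so the aside ``in fact $B(A) = A$'' is false in general; the passage from $A$ to $B(A)$ is a genuine enlargement and Theorem \ref{maxpo}(ii) is doing real work here, not just relabelling. Second, essentiality and rationality of $A \subset B(A)$ are immediate from the fact that $B(A)$ is by construction an intermediate ring between $A$ and the complete ring of quotients $Q(A)$, which is the universal rational extension; one need not (and should not) invoke Proposition \ref{Prop_essext} or Theorem \ref{adjoin_idemp} for this, as those results occur later in the paper than Theorem \ref{vNr_maxpo} and appealing to them here would be a forward reference. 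Similarly Lemma \ref{deck_transform_Baer}, which you cite for the compatibility check, also appears later; the rigidity it encodes is not needed to state Theorem \ref{vNr_maxpo}, only to extend it to the non-regular case in Theorem \ref{redring-rcrs}.
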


We make the following Lemma, whose proof is quite straightforward, therefore it is omitted.

\begin{lemma}\label{f-ring-supsninf} Let $A$ be an $f$-ring then for any $x,y\in A$ the following identity holds
$$y + (x-y)^+  = x + (y-x)^+  = x \vee y$$ 
$$y - (x-y)^-  = x - (y-x)^-  = x \wedge y$$
\end{lemma}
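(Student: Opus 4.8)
The plan is to reduce the statement to the underlying lattice-ordered abelian group of $A$; the multiplicative structure plays no role. First I would recall the two facts about $\ell$-groups that do all the work: the order is translation invariant, so that $a + (b\vee c) = (a+b)\vee(a+c)$ and $a + (b\wedge c) = (a+b)\wedge(a+c)$ for all $a,b,c$, and negation reverses the lattice, $-(b\vee c) = (-b)\wedge(-c)$. Together with the definitions $z^+ = z\vee 0$ and $z^- = (-z)\vee 0$ (so that $-z^- = z\wedge 0$), these suffice, and all of them hold in any $f$-ring because its additive group is an $\ell$-group.

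For the first line one computes directly, using translation invariance: $y + (x-y)^+ = y + \big((x-y)\vee 0\big) = \big(y+(x-y)\big)\vee(y+0) = x\vee y$, and symmetrically $x + (y-x)^+ = x + \big((y-x)\vee 0\big) = \big(x+(y-x)\big)\vee(x+0) = y\vee x = x\vee y$. This gives equality of all three expressions in the first identity.

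For the second line I would use $-z^- = z\wedge 0$ and argue the same way: $y - (x-y)^- = y + \big((x-y)\wedge 0\big) = \big(y+(x-y)\big)\wedge(y+0) = x\wedge y$, and $x - (y-x)^- = x + \big((y-x)\wedge 0\big) = \big(x+(y-x)\big)\wedge(x+0) = y\wedge x = x\wedge y$, which finishes the proof.

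I do not expect any real obstacle here: the only thing to be careful about is to fix the sign convention for $z^-$ (namely $z^- = (-z)\vee 0$, equivalently $-z^- = z\wedge 0$) and to remember that the lattice operations in an $f$-ring are translation invariant, since its additive reduct is an $\ell$-group. This is exactly why the paper omits the argument.
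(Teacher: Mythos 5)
Your proof is correct and is exactly the straightforward $\ell$-group computation (translation invariance of the lattice operations plus $z^+=z\vee 0$, $-z^-=z\wedge 0$) that the paper has in mind when it omits the proof as "quite straightforward." Nothing to add.
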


\begin{theorem} \label{rcrs_totq}
\begin{enumerate}
\item[(i)] Let $A$ be a reduced ring integrally closed in a real closed von Neumann regular ring $B$. Let $f\in A[T]$ and 
$g\in B[T]$ be monic polynomials of odd degree (i.e. $\deg(f),\deg(g)\in 2\N +1$). Then $f$ has a zero in $A$ and $g$ has a zero in $B$.
\item[(ii)] Assume the rings $A$ and $B$ as above. Then $A$ has the property that $\qf{A/(\p\cap A)}$ is algebraically
closed in $B/\p$ for all $\p\in \spec B$ (i.e. $\qf{A/(\p\cap A)}$ is a real closed field).
\item[(iii)] Let $A$ be a reduced commutative unitary ring and $T(A)$ be von Neumann regular, then 
$$\qf{A/(\p\cap A)}=T(A)/\p \qquad \forall\p\in\spec T(A)$$
Now if this $A$ is as in (ii) and $T(A)$ an intermediate ring of $A$ and $B$ then 
$T(A)$ is in fact a real closed ring (not necessarily real closed $*$).
\end{enumerate}
\end{theorem}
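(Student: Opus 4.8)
The plan is to prove the three parts in order, using~(i) in the proof of~(ii), and both of these together with the first assertion of~(iii) in the proof of the second assertion of~(iii).

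\emph{Part~(i).} That $g$ has a zero in $B$ is the standard fact that a monic polynomial of odd degree over a real closed ring has a root in it; I would invoke it directly (fibrewise, $g$ reduces modulo each $\p\in\spec B$ to a monic odd-degree polynomial over the real closed field $B/\p$, which has a root there, and these roots patch over the Boolean space $\spec B$). Granting this for $B$, apply it to $f\in A[T]\subseteq B[T]$ to get a root $r\in B$ of $f$; since $f$ is monic, $r$ is integral over $A$, and as $A$ is integrally closed in $B$, $r\in A$.

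\emph{Part~(ii).} Fix $\p\in\spec B$ and put $\mathfrak q=\p\cap A$, $K=\qf{A/\mathfrak q}$, viewed as a subfield of the real closed field $B/\p$. A subfield of a real closed field is relatively algebraically closed in it exactly when it is itself real closed, so it suffices to check the three defining properties of a real closed field for $K$. Formal realness is inherited from $B/\p$. For the odd-degree property I would clear denominators: a monic $h\in K[T]$ of odd degree becomes, after the substitution $T\mapsto T/\bar d$ (with $\bar d\in(A/\mathfrak q)\wo{0}$ a common denominator of its coefficients) and rescaling, a monic polynomial of the same degree over $A/\mathfrak q$; lift this to a monic polynomial over $A$ of odd degree, invoke~(i) to get a root in $A$, reduce modulo $\mathfrak q$ and undo the substitution to obtain a root of $h$ in $K$. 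For the square property I would reduce, multiplying through by the square of a denominator, to the case $c=\bar b$ with $b\in A$; in $B$ the nonnegative square root $|b|=\sqrt{b^2}$ exists (every element $\ge 0$ of a real closed ring is a square) and is a zero of $X^2-b^2\in A[X]$, hence lies in $A$; likewise $\sqrt{|b|}\in B$ is a zero of $X^2-|b|\in A[X]$, hence lies in $A$; reducing modulo $\p$, the image of $\sqrt{|b|}$ lies in $K$ and its square is $\overline{|b|}$, which in the real closed field $B/\p$ equals $|c|\in\{c,-c\}$. Hence $K$ is real closed. I expect this square-root step to be the main obstacle: it is where real closedness of $B$ and integral closedness of $A$ in $B$ must be combined, and it is what forces the detour through $|b|$ rather than extracting $\sqrt{\pm\bar b}$ directly.

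\emph{Part~(iii).} The identity $\qf{A/(\p\cap A)}=T(A)/\p$ for $\p\in\spec T(A)$ is quick: with $\mathfrak q=\p\cap A$, the field $T(A)/\p$ contains the domain $A/\mathfrak q$, hence contains $\qf{A/\mathfrak q}$; conversely every element of $T(A)$ has the form $a/s$ with $a\in A$ and $s$ a non-zerodivisor of $A$, such $s$ is a unit of $T(A)$ so $s\notin\p$, and thus $\overline{a/s}=\bar a\,\bar s^{-1}\in\qf{A/\mathfrak q}$. For the final assertion, assume $A$ is as in~(ii) with $A\subseteq T(A)\subseteq B$. First, $T(A)$ is integrally closed in $B$: if $x\in B$ satisfies a monic equation over $T(A)$, clearing denominators by a common non-zerodivisor $s\in A$ makes $sx$ integral over $A$, so $sx\in A$ and $x=(sx)s^{-1}\in T(A)$. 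Hence $T(A)$ itself satisfies the hypotheses of~(ii), so $\qf{T(A)/(\mathfrak P\cap T(A))}$ is a real closed field for every $\mathfrak P\in\spec B$. Next, $\spec B\to\spec T(A)$ is surjective: if $\p\in\spec T(A)$ and $\p B=B$, write $1=\sum p_ib_i$ with $p_i\in\p$; the finitely generated ideal $(p_1,\dots,p_n)$ of the von Neumann regular ring $T(A)$ equals $(e)$ for some idempotent $e\in\p$, whence $1\in eB$, forcing $e=1$ and contradicting $e\in\p$; so $\p B\subsetneq B$ lies in a maximal ideal $\mathfrak P$ of $B$, and $\mathfrak P\cap T(A)=\p$ because $\p$ is maximal. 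Combining, for $\p\in\spec T(A)$ choose $\mathfrak P\in\spec B$ above it; then, by the first part applied to $T(A)$, $T(A)/\p=\qf{T(A)/(\mathfrak P\cap T(A))}$ is a real closed field. Thus $T(A)$ is von Neumann regular with all residue fields real closed, hence a real closed ring by the characterization of von Neumann regular real closed rings (cf.~\cite{Capco4}); it need not be Baer, so it need not be real closed~$*$. Apart from the square-root step in~(ii), the surjectivity of $\spec B\to\spec T(A)$ and this last characterization are the only non-routine ingredients.
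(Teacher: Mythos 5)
Your proof is correct, and parts (i) and the first identity of (iii) follow the paper essentially verbatim: the fibrewise roots patched over the Boolean space $\spec B$, and the observation that a regular element of $A$ is a unit of $T(A)$ and hence outside $\p$ (your justification of this last point is in fact cleaner than the paper's detour through minimal primes of $A$). The genuine divergence is in (ii). The paper establishes the square property of $K=\qf{A/(\p\cap A)}$ structurally: it shows $A$ is a sub-$f$-ring of $B$ (using that $a^+=a\cdot a^+(a^+)'$, where the idempotent $a^+(a^+)'$ lies in $A$ by integral closedness), observes that $\p\cap A$ is a prime $l$-ideal, and invokes the lattice-ordered quotient theory (\cite{BKW} Corollaire 9.2.5) to conclude that $A/(\p\cap A)$ is totally ordered by squares. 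You instead extract square roots directly, showing that $|b|=\sqrt{b^2}$ and then $\sqrt{|b|}$ are roots of monic quadratics over $A$ and hence lie in $A$, so that $\bar b$ or $-\bar b$ is a square in $K$. Both routes then feed the same Artin--Schreier criterion (\cite{KS} Satz 1). Your version is more elementary and avoids the $f$-ring machinery entirely, at the cost of using twice that nonnegative elements of a real closed ring are squares (which the paper anyway quotes from \cite{SM}). In (iii) you prove, rather than cite, the lying-over statement $\spec B\twoheadrightarrow\spec T(A)$ (the paper refers to \cite{raphael} Lemma 1.14), and you apply (ii) to $T(A)$ itself after checking it is integrally closed in $B$, whereas the paper applies (ii) to $A$ and transfers the conclusion through the identity $T(A)/\p=\qf{A/(\p\cap A)}$; these are equivalent, and your extra check on $T(A)$ is a correct, harmless addition.
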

\begin{proof} 
(i) First we show that $g$ has a zero in $B$. Let $\p \in \spec B$ then we know by the very definition
of real closed rings  that $B/\p$ is a real closed field. Thus the canonical image of $g$ in $B/\p[T]$, denote by 
$\widehat g$, has a zero in $B/\p$ (note that $g$ is monic and thus $\widehat g$ will have the same degree
as $g$) say $\widehat b_\p$, where $\widehat b_\p$ is the canonical image of some $b_\p\in B$
in $B/\p$. We can do this for any prime ideal $\p\in \spec B$. Now set 
$$V_\p := \{\mathfrak q  \in \spec B \sep f(b_\p) \in \mathfrak q \}$$
now because $B$ is von Neumann regular we know that $V_\p$ is a clopen set in $\spec B$, furthermore we know that 
$\p \in V_\p$. Thus 
$$\spec B = \bigcup_{\p\in \spec B} V_\p$$
Now because $\spec B$ is compact, there are $V_1,\dots, V_n \subset \spec B$ that are clopen and together they 
cover $\spec B$ and such that for any $i\in \{1,\dots, n\}$ we associate a $b_i\in B$ such that 
$$V_i = \{\p  \in \spec B \sep f(b_i) \in \p\}$$ 

We look at the global section ring of the sheaf structure of $B$ and we define mutually disjoint clopen sets $U_1, \dots, U_n$ by 
$$U_1 := V_1, \dots, U_i := V_i\backslash U_{i-1} \textrm{ for } i=2,\dots, n$$ 
Now define $b$ in the global section ring (which is actually isomorphic to $B$) by 
$$b(\p):= b_i(\p) \textrm{ if } \p \in U_i$$
Then we observe that for any $\p\in \spec B$ one has $g(b)\in \p$, and because $B$ is reduced we conclude that 
$g(b) = 0$. Thus $g$ has a zero in $B$.

Now because $f$ is monic and of odd degree, it has a zero in $B$ and because $A$ is integrally closed in $B$, this zero
must actually be in $A$.

\vspace{5mm}
\item[(ii)] Let $\p$ be in $\sper B$.
Set $K:=\qf{A/(\p\cap A)}$ and $L:=B/\p$, also for any $x\in B$ and $f\in B[T]$ denote $\widehat x$ and $\widehat f$ 
to be the canonical image of $x$ in $L$ and the canonical image of $f$ in $L[T]$ respectively.

Suppose now that $\widehat f$ is in $K[T]$, monic and of odd degree for some $f\in B[T]$. We may write 
$$\widehat f(T) = \sum_{i=0}^{n-1}\frac{\widehat{a_i}}{\widehat{a_n}}T^i + T^n \quad a_n\in A\backslash\p, a_i\in A, n\in 2\N+1$$
Define now $g(T)\in A[T]$ by 
$$g(T) := T^n + \sum_{i=0}^{n-1} a_i a_n^{n-i-1} T^i$$
Then $g$ is a monic polynomial of odd degree in $A[T]$ and by (i) we can conclude that $g$ has a zero,
say $a$, in $A$. Thus we may as well conclude that $\widehat a$ is a zero of $\widehat g$. 

Now we observe that 
$$ \widehat{a_n}^n \widehat{f}(T) = \widehat{g}(\widehat a_n T) $$
and because $\widehat a_n$ has an inverse in $K$ we learn that $\widehat{a}\widehat{a_n}^{-1}$ is a zero of $\widehat f$.
But $\widehat{a}\widehat{a_n}^{-1}$ is in $K$. Thus we have shown that any monic polynomial of odd degree in $K[T]$ has a 
zero in $K$.

We do know that $B$ is a real closed 
ring thus its partial ordering is (see for instance \cite{SM} Proposition 12.4(c))
$$ B^+ = \{b^2 \sep b\in B\} $$
Since $A$ is integrally closed in $B$, we conclude that the set
$$A^+ := B^+ \cap A = \{a^2 \sep a \in A\}$$ 
is a partial ordering of $A$. We show that $A$ with this partial ordering is actually a sub-$f$-ring of $B$.
By Lemma \ref{f-ring-supsninf}, we need only show that for any $a\in A$, $a^+\in B$ is in $A$ (and thus so is $a^-$).  $B$ is a 
von Neumann regular ring, so $a^+$ has a quasi-inverse we shall denote by $(a^+)'$. Since $A$ is integrally closed in $B$, 
we then also know that the idempotent $a^+(a^+)'$ is in $A$, but then 
$$a(a^+(a^+)') = (a^+ - a^-)(a^+(a^+)')=a^+(a^+(a^+)')=(a^+)^2(a^+)' = a^+\in A$$ 

We will now show that $K^+$ defined by 
$$K^+ := \{(\widehat a/\widehat b)^2 \sep a\in A, b\in A\backslash\p \}$$
is a total ordering of $K$ (and thus by \cite{KS} p.16 Satz 1, $K$ is a real closed field). Since $A$ is a sub-$f$-ring 
of $B$ (the partial ordering of both is their weakest partial ordering) and $\p\cap A$ is a prime $l$-ideal of $A$ 
(we know that $B$ is a real closed regular ring, and by \cite{Capco} Proposition 7, all of it's residue fields are
real closed ring and so by \cite{BKW} Corollaire 9.2.5, $\p$ is an $l$-ideal of $B$. It is then easy to see
that it's restriction to $A$ is also an $l$-ideal), 
we know by \cite{BKW} Corollaire 9.2.5, that $A/(\p\cap A)$ is totally ordered by 
$$A^+/(\p\cap A) = \{\widehat a^2 \sep a\in A\}$$
One easily checks that this total ordering of $A/(\p\cap A)$ induces a total ordering of $K$ which is non other than
$K^+$.

\item[(iii)] Let $\p$ be in $T(A)$, clearly 
$$A/(\p\cap A) \hookrightarrow T(A)/\p$$
we show that all elements of $T(A)/\p$ is actually an element in $\qf{A/(\p\cap A)}$. For any $q\in T(A)$ we denote
$\bar q$ as the image of $q$ in $T(A)/\p$. 

Let $q\in T(A)\backslash \p$, then $\bar q$ is non-zero in $T(A)/\p$. There is a regular element $a\in A$ such that
$aq \in A$, and because $a$ is regular it cannot be contained in $\p$, otherwise its contained in $\p\cap A$ which is
a minimal ideal in $A$ (see \cite{Mewborn2} Theorem 3.1 and Theorem 4.4. Note that $\spec Q(A)\rightarrow \spec T(A)$ is a surjection because 
$T(A)$ is a regular ring, see for instance \cite{raphael} Lemma 1.14), but all regular elements of $A$ are not in any minimal ideal of $A$). Thus
since $\p$ is prime, we learn that $aq \in A\backslash \p$. Thus $\bar a, \bar a\bar q \in (A/(\p\cap A))^*$, thus
$\bar a$ has an inverse $\bar a^{-1}$ in $\qf{A/(\p\cap A)}$ and so 
$$\bar a^{-1}\bar a\bar q = \bar q \in \qf{A/(\p\cap A)}$$

So $T(A)/\p$ is a subring of $\qf{A/(\p\cap A)}$. Now $T(A)$ itself is a regular ring, 
so $T(A)/\p$ must be a field and because $A$ is a subring of $T(A)$ we get
$\qf{A/(\p\cap A)}$ as a subring of $T(A)/\p$. Therefore 
$$T(A)/\p = \qf{A/(\p\cap A)}$$
Because $T(A)$ is a subring of $B$ and both are regular rings, we then know that any prime ideal of $T(A)$ is a restriction
of prime ideal of $B$ (see \cite{raphael} Lemma 1.14). Thus by (ii) we can conclude that $T(A)/\p$ is a real closed field for any prime ideal 
$\p \in T(A)$. By \cite{Capco} Proposition 7, $T(A)$ must be a real closed ring.
\end{proof}

\begin{lemma}\label{Baertqr=vNr}
If $A$ is a Baer reduced commutative unitary ring, then $T(A)$ is von Neumann regular.
\end{lemma}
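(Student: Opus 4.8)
The plan is to prove directly that every element of $T(A)$ has a quasi-inverse, i.e.\ that for each $x\in T(A)$ there is a $y\in T(A)$ with $xyx=x$; this is precisely von Neumann regularity, so that is all that needs to be checked.

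The first step is to feed the Baer hypothesis one element at a time: for each $a\in A$ there is an idempotent $e=e_a\in A$ with $\ann_A(a)=eA$, so that $ea=0$ and $e^2=e$. The key point I would establish next is that $a+e$ is a \emph{regular} element of $A$. Indeed, if $(a+e)c=0$ for some $c\in A$, then multiplying by $e$ and using $ea=0$ gives $ec=0$, hence $ac=0$, hence $c\in\ann_A(a)=eA$, and therefore $c=ec=0$. Consequently $a+e$ becomes a unit in $T(A)$; put $u=u_a:=(a+e)^{-1}\in T(A)$, and note that the identity $e(a+e)=e$ (valid since $ea=0$, $e^2=e$) gives $eu=e$ in $T(A)$.

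With this in hand I would exhibit the quasi-inverse explicitly. For $a\in A$ set $a':=(1-e_a)u_a\in T(A)$. Using $ae_a=0$ and $e_au_a=e_a$ one checks that $aa'=au_a=(a+e_a)u_a-e_au_a=1-e_a$, and hence $aa'a=(1-e_a)a=a$; so every element of $A$ has a quasi-inverse in $T(A)$. For a general $x\in T(A)$, write $x=ab^{-1}$ with $a\in A$ and $b\in A$ regular (possible because $T(A)$ is the localization of $A$ at its regular elements); then $ba'$ is a quasi-inverse of $x$, since $x(ba')x=ab^{-1}\,ba'\,ab^{-1}=(aa'a)b^{-1}=ab^{-1}=x$. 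Hence every element of $T(A)$ has a quasi-inverse and $T(A)$ is von Neumann regular.

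I do not expect a genuine obstacle. The only step that is more than bookkeeping is recognizing that perturbing $a$ by the idempotent $e_a$ that generates $\ann_A(a)$ turns it into a regular element, which is exactly what allows the annihilator information to be converted, inside $T(A)$, into an honest quasi-inverse; everything else is routine manipulation of idempotents and of fractions.
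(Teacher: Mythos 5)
Your proof is correct, and it takes a genuinely different route from the paper. The paper disposes of the lemma by citation: it notes that for $a,b\in A$ the annihilator of the two-generated ideal $aA+bA$ is generated by an idempotent, and then invokes Mewborn's Proposition 2.3 together with Huckaba--Keller's Theorem B to conclude that $T(A)$ is regular. You instead give a self-contained, explicit construction: from $\ann_A(a)=e A$ with $e$ idempotent you observe that $a+e$ is a non-zero-divisor (the verification $(a+e)c=0\Rightarrow ec=0\Rightarrow ac=0\Rightarrow c\in eA\Rightarrow c=ec=0$ is exactly right), so it is invertible in $T(A)$, and then $a':=(1-e)(a+e)^{-1}$ satisfies $aa'=1-e$ and hence $aa'a=a$; passing to a general fraction $ab^{-1}$ is then immediate. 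This is the classical mechanism by which the Baer condition forces $T(A)$ to be regular, and your writeup essentially reproves the cited external results in this special commutative setting. What your approach buys is transparency and independence from the literature, plus an explicit quasi-inverse formula that also exhibits $aa'$ as the idempotent $1-e$ (which is the "support" of $a$); what the paper's approach buys is brevity and the slightly more general framing via annihilators of finitely generated ideals. One small remark: you never use reducedness explicitly, which is fine --- for commutative rings the element-wise Baer condition already forces reducedness, so no hypothesis is being silently dropped.
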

\begin{proof}
Let $a,b\in A$ and consider the ideal $I=aA+bB$ then because $A$ is Baer, there
is an idempotent $e\in E(A)$ such that $\ann_A(I)=eA=\ann_A(1-e)$. 
By \cite{Mewborn} Proposition 2.3 and then \cite{huckaba} Theorem B, we conclude that $T(A)$ is a regular ring. 
\end{proof}

\begin{theorem} \label{TqrAndIc}
Let $A$ be a Baer ring, then $A$ is integrally closed in $T(A)$ iff for any $\p\in\spec T(A)$ we have $A/(\p\cap A)$ is integrally
closed in $T(A)/\p$.
\end{theorem}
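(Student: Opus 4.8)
The plan is to reduce both implications to two preliminary facts about the (commutative, unitary) Baer ring $A$. First, a commutative Baer ring is automatically reduced, so by Lemma~\ref{Baertqr=vNr} the ring $T(A)$ is von Neumann regular; in particular every $c\in T(A)$ generates the same ideal as an idempotent, and every prime of $T(A)$ is maximal. Second — and this is the only point I would need to argue separately — I claim that \emph{every idempotent of $T(A)$ already lies in $A$}. To see this I would write $T(A)=S^{-1}A$ with $S$ the set of regular elements of $A$, so an idempotent $e\in T(A)$ has the form $e=a/s$ with $a\in A$, $s\in S$, and $e^2=e$ forces $a^2=sa$, i.e. $a(a-s)=0$. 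Since $A$ is Baer, $\ann_A(a)=fA$ for an idempotent $f\in A$; as $a$ annihilates $\ann_A(a)$ one gets $a\in\ann_A(fA)=(1-f)A$, hence $fa=0$ and $a=(1-f)a$, while $a-s\in\ann_A(a)=fA$ gives $a-s=f(a-s)=-fs$. Therefore $a=(1-f)s$ and $e=a/s=1-f\in A$.

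For the implication ``$\Rightarrow$'' I would assume $A$ is integrally closed in $T(A)$, fix $\p\in\spec T(A)$, and take $\bar x\in T(A)/\p$ integral over $A/(\p\cap A)$. Lifting $\bar x$ to some $x\in T(A)$ and lifting the monic integral equation, I get $f=T^n+a_{n-1}T^{n-1}+\dots+a_1T+a_0\in A[T]$ monic with $f(x)\in\p$. By von Neumann regularity there is an idempotent $\epsilon\in T(A)$ with $f(x)T(A)=\epsilon T(A)$; then $\epsilon\in\p$ and $(1-\epsilon)f(x)=0$. Set $e:=1-\epsilon$, which is an idempotent of $T(A)$, hence lies in $A$ by the first paragraph, and reduces to $\bar e=1$ modulo $\p$. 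Using $e^k=e$ for $k\ge1$ one computes $f(ex)=e\bigl(f(x)-a_0\bigr)+a_0=-ea_0+a_0=(1-e)a_0$, so $ex$ is a root of the monic polynomial $T^n+a_{n-1}T^{n-1}+\dots+a_1T+ea_0\in A[T]$. Thus $ex$ is integral over $A$, so $ex\in A$, and reducing modulo $\p$ with $\bar e=1$ gives $\bar x=\overline{ex}\in A/(\p\cap A)$; this is exactly the assertion that $A/(\p\cap A)$ is integrally closed in $T(A)/\p$.

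For the implication ``$\Leftarrow$'' I would take $x\in T(A)$ integral over $A$ via a monic $f\in A[T]$ with $f(x)=0$. For each $\p\in\spec T(A)$ the image $\bar x$ satisfies $\bar f(\bar x)=0$, hence is integral over $A/(\p\cap A)$, so by hypothesis $\bar x\in A/(\p\cap A)$, i.e. $x-a_\p\in\p$ for some $a_\p\in A$. By von Neumann regularity pick an idempotent $\epsilon_\p$ with $(x-a_\p)T(A)=\epsilon_\p T(A)$; then $\epsilon_\p\in\p$, the idempotent $e_\p:=1-\epsilon_\p$ lies in $A$ (first paragraph) and in $T(A)\setminus\p$, and $e_\p(x-a_\p)=0$, so $e_\p x=e_\p a_\p\in A$. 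The basic clopen sets $\{\mathfrak q\in\spec T(A)\sep e_\p\notin\mathfrak q\}$ cover $\spec T(A)$; by compactness finitely many $e_1,\dots,e_m$ suffice, which amounts to $\prod_{j=1}^m(1-e_j)=0$ in $T(A)$, hence in $A$. Orthogonalising via $g_j:=e_j\prod_{k<j}(1-e_k)\in A$ gives $\sum_{j=1}^m g_j=1-\prod_{j=1}^m(1-e_j)=1$ and $g_je_j=g_j$, whence $g_jx=g_je_jx=g_je_ja_j=g_ja_j$ and finally $x=\sum_{j=1}^m g_ja_j\in A$.

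I expect the main obstacle to be precisely the idempotent fact isolated in the first paragraph: once $T(A)$ and $A$ have the same idempotents, the forward direction is a one-line polynomial manipulation and the backward direction is the standard gluing of finitely many clopen pieces of $\spec T(A)$; the only real care needed is to keep every polynomial monic so that integral closedness of $A$ in $T(A)$ genuinely applies. If one wishes, Theorem~\ref{rcrs_totq}(iii), which identifies $T(A)/\p$ with $\qf{A/(\p\cap A)}$, can be used to restate the right-hand condition as ``$A/(\p\cap A)$ is an integrally closed domain for every $\p\in\spec T(A)$'', but this reformulation is not needed for the argument.
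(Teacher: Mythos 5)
Your proof is correct and follows essentially the same route as the paper: von Neumann regularity of $T(A)$ via Lemma~\ref{Baertqr=vNr}, killing $f(x)$ by an idempotent congruent to $1$ modulo $\p$ for ``$\Rightarrow$'', and a compactness argument gluing the local approximations $a_\p$ by orthogonal idempotents for ``$\Leftarrow$''. The only difference is that you prove $E(T(A))\subset A$ directly from the Baer condition, where the paper cites Raphael for this fact (and, in the forward direction, gets the idempotent into $A$ from integral closedness instead); your self-contained argument for it is valid.
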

\begin{proof}
For simplicity let us set $B:=T(A)$.

"$\Rightarrow$" Suppose by contradiction there exists an $f\in A[T]$ monic, $\p\in\spec B$ and $b\in B$ such that
\begin{itemize}
\item[i.] $f(b)\in\p$ 
\item[ii.] $(b+\p)\cap A = \emptyset$
\end{itemize}

Then we have the following cases $\dots$ 

\vspace{5mm}\noindent \underline{Case 1: $b\in \p$}. Then $b\equiv 0\mod\p$ and this is a contradiction to ii.

\vspace{5mm}\noindent \underline{Case 1: $b\not\in \p$}. Then $f(b)\in \p\subset B$. Now because $A$ is Baer we know 
by Lemma \ref{Baertqr=vNr} that 
$B$ is von Neumann regular. Let $c\in B$ be the quasi-inverse 
of $f(b)$ . Then $cf(b)$ is an 
idempotent in $B$ and so it must be in $A$ (because $A$ is integrally closed in $B$). Now $1-cf(b)$ is also an idempotent, denote $e:=1-cf(b)$ 
(clearly $e\not\in\p$ because $1-e\in\p$) and observe that $ef(b)=0$. Now we can write 
$$ f(T) = T^n + \sum_{i=0}^{n-1} a_iT^i$$
for some $n\in\N$ and $a_0,\dots,a_n\in A$. Then 
$e^n f(b)=0$ and so $eb$ is a zero of the monic polynomial $g\in A[T]$ defined by
$$g(T) = T^n + \sum_{i=0}^{n-1} a_ie^{n-i}T^i$$
but because $A$ is integrally closed in $B$, we then know that $eb\in A$. 

But all these implies that $eb\equiv b\mod\p$ (since $1-e\in \p$) and so $(b+\p)\cap A \neq \emptyset$. Again a contradiction!

"$\Leftarrow$" Let $f\in A[T]$ be monic and $f(b)=0$ for some $b\in B$. For any $\p\in \spec B$, consider $a_\p\in A$ to be such that
$f(a_\p)\in\p$ and $a_\p\equiv b\mod\p$. Consider the clopen sets (because $A$ is Baer and so $B$ is regular):
$$ V_\p := \{\mathfrak q\in \spec B : a_\p \equiv b\mod \mathfrak q\}$$
then $\p\in V_\p$ for all $\p\in\spec B$ 
$$\bigcup_{\p\in\spec B} V_\p = \spec B$$
Thus there are finitely many
$a_1,...,a_n\in A$ such that 
$$\bigcup_{i=1}^n V_i = \spec B$$
where 
$$V_i = \{\mathfrak q\in \spec B : a_i \equiv b\mod \mathfrak q\}$$
Define now another family of clopen set 
$$U_1 := V_1, U_i = V_i\backslash U_{i-1} i\geq 2$$
we can also define the idempotents $e_i\in B$ by
$$e_i \mod \p =\left\{
\begin{array}{ll}
1  & \p \in U_i\\
0	& \p\not\in U_i\\
\end{array}
\right.$$

Clearly $b= \sum_{i=1}^n a_ie_i$
and all $a_i,e_i\in A$ for $i=1,\dots, n$ ($E(A)=E(T(A))$ because $A$ is Baer, see for instance \cite{raphael} 
). Thus $b\in A$!
\end{proof}

\begin{cor}
A reduced Baer poring $B$ is real closed $*$ iff for any minimal prime ideal $\p\in \minspec B$ one has 
$B/\p$ is a real closed $*$ integral domain.
\end{cor}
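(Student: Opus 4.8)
The plan is to pass through the total quotient ring $T(B)$, which is von Neumann regular by Lemma~\ref{Baertqr=vNr}, and then transfer the statement to the residue domains at minimal primes. Recall the standard facts for a reduced Baer ring $B$: the inclusion $B\hookrightarrow T(B)$ is a rational extension, $E(B)=E(T(B))$, and $\p\mapsto\p\cap B$ is a bijection from $\spec T(B)$ onto $\minspec B$ (in the reduced ring $B$ every minimal prime consists of zero divisors and hence extends to a prime of $T(B)$); by Theorem~\ref{rcrs_totq}(iii) this bijection identifies $T(B)/\p$ with $\qf{B/(\p\cap B)}$. Since $B$ is Baer the Boolean algebra $E(B)=E(T(B))$ is complete, so the Boolean space $\spec T(B)\cong\minspec B$ is extremally disconnected; in particular $T(B)$ is itself a Baer ring. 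Finally, for any $\p\in\spec T(B)$ the ring $B/(\p\cap B)$ is an integral domain, hence trivially a Baer ring, with total quotient ring $T(B/(\p\cap B))=\qf{B/(\p\cap B)}$.

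Now I invoke the description of real closed $*$ rings coming from \cite{Capco3} and \cite{Capco4} (equivalently, from the bijection established in this paper between the real closures $*$ of a real ring and its maximal partial orderings): a reduced ring $C$ is real closed $*$ if and only if $T(C)$ is a real closed $*$ von Neumann regular ring and $C$ is integrally closed in $T(C)$; and a von Neumann regular real ring is real closed $*$ precisely when it is a real closed ring whose spectrum is extremally disconnected. Apply this to $C=B$. By \cite{Capco} Proposition~7, $T(B)$ is a real closed regular ring exactly when each residue field $T(B)/\p=\qf{B/(\p\cap B)}$ is a real closed field, and then — the extremal disconnectedness of $\spec T(B)$ being automatic by the first paragraph — $T(B)$ is in fact real closed $*$; on the other hand, Theorem~\ref{TqrAndIc} says $B$ is integrally closed in $T(B)$ if and only if $B/(\p\cap B)$ is integrally closed in $T(B)/\p=\qf{B/(\p\cap B)}$ for every $\p\in\spec T(B)$. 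Apply the same description to $C=B/(\p\cap B)$, a Baer domain with total quotient ring $\qf{B/(\p\cap B)}$: it is a real closed $*$ integral domain if and only if $\qf{B/(\p\cap B)}$ is a real closed field and $B/(\p\cap B)$ is integrally closed in it. Letting $\p$ range over $\spec T(B)$ and $\p\cap B$ over $\minspec B$, the two conjunctions coincide term by term, and the corollary follows.

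The principal obstacle is the first of the two descriptions used above: that for a reduced Baer ring, being real closed $*$ is detected by the pair ``$T(B)$ is real closed $*$ regular'' together with ``$B$ is integrally closed in $T(B)$''. Necessity of this is comparatively soft, but sufficiency is exactly where the Baer hypothesis is indispensable — it forces $E(B)=E(T(B))$ and the extremal disconnectedness of $\minspec B$, so that the partial orderings and the spectral sections classified in Theorem~\ref{vNr_maxpo} transfer without loss between $B$, $T(B)$ and the domains $B/(\p\cap B)$, and no hidden idempotent or additional partial ordering can obstruct the reconstruction of the real closed $*$ structure of $B$ from that of its von Neumann regular hull $T(B)$. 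Note that Theorem~\ref{rcrs_totq}(iii) already flags that without such control an intermediate ring need only be real closed, not real closed $*$.
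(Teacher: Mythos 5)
Your proposal is correct and follows essentially the same route as the paper: reduce to the von Neumann regular total quotient ring $T(B)$ (Lemma \ref{Baertqr=vNr}), use the criterion that a Baer reduced ring is real closed $*$ iff it is integrally closed in its real closed $*$ regular total quotient ring (which the paper obtains from \cite{srcr} Theorem 3 via $Q(T(B))=Q(B)$ and records afterwards as Lemma \ref{rcrsIffTqr}), and then localize at the primes of $T(B)$ — i.e.\ the minimal primes of $B$ — using Theorem \ref{TqrAndIc} together with the identification $T(B)/\p=\qf{B/(\p\cap B)}$ from Theorem \ref{rcrs_totq}(iii). The only cosmetic difference is that you state the $T(B)$-criterion directly as imported from the literature, while the paper derives it in passing from Dai's theorem phrased in terms of $Q(B)$.
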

\begin{proof}
"$\Rightarrow$" If $B$ is real closed real $*$, then it is Baer and it is integrally closed in $Q(B)$, and furthermore 
$Q(B)$ is real closed $*$. Thus by Theorem \ref{rcrs_totq} $T(B)$ is also real closed $*$ (as it is also Baer and by 
Theorem \ref{rcrs_totq} iii a real closed regular ring, 
we can then use \cite{Capco} Theorem 15). Using \cite{SV} Proposition 2 and by Theorem \ref{TqrAndIc} we know 
that $B/\p$ is a real closed ring $*$ 
for any minimal prime ideal $\p$ in $\spec B$ (this is because the restriction of prime ideals of $T(B)$ to $B$
are exactly the minimal prime ideals of $B$, see for instance \cite{Mewborn2} Theorem 3.1 and Theorem 4.4),

"$\Leftarrow$" The same reasons as above shows us that $B$ is integrally closed in $T(B)$ and $T(B)$ is a real 
closed $*$ ring. Now $Q(B)$ is also the complete ring of quotients of $T(B)$ so by \cite{srcr} Theorem 3, $Q(B)$ is also 
a real closed ring $*$ and in this case $T(B)$ is obviously integrally
closed in $Q(B)$. Our initial hypothesis and results in Theorem \ref{TqrAndIc}, \cite{SV} Proposition 2, \cite{Mewborn2} Theorem 3.1 and Theorem 4.4,
does imply that $B$ is integrally closed in $Q(B)$. All this implies satisfies the condition of \cite{srcr} Theorem 3 for B, making
us conclude that $B$ is real closed $*$.
\end{proof}

\noindent From the proof of the Corollary above we also immediately have the following

\begin{lemma} \label{rcrsIffTqr}
A poring $A$, is real closed $*$ iff it is integrally closed in its total quotient ring and its total
quotient ring is a real closed regular (Baer) ring.
\end{lemma}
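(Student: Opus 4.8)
The plan is to read off both implications from the proof of the preceding Corollary, replacing the Baer ring there by an arbitrary poring $A$ and supplying the one ingredient that the Corollary obtained from its standing hypothesis, namely that $T(A)$ is a Baer ring.

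For ``$\Rightarrow$'', suppose $A$ is real closed $*$. Then $A$ is reduced (it is in particular a real closed ring) and, as recorded in the proof of the Corollary, $A$ is Baer, is integrally closed in its complete ring of quotients $Q(A)$, and $Q(A)$ is again real closed $*$; in particular $Q(A)$ is a real closed von Neumann regular ring. Since $A$ is Baer, Lemma \ref{Baertqr=vNr} shows $T(A)$ is von Neumann regular, so Theorem \ref{rcrs_totq}(iii), applied with the ``$B$'' there taken to be $Q(A)$ (we have the intermediate ring $A\subseteq T(A)\subseteq Q(A)$ with $A$ integrally closed in $Q(A)$), gives that $T(A)$ is a real closed ring. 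As $A$ is integrally closed in $Q(A)$ and $T(A)$ sits between, $A$ is integrally closed in $T(A)$. Finally $T(A)$ is Baer: since $A$ is Baer we have $E(A)=E(T(A))$ (see \cite{raphael}), and because $T(A)$ is the localization of $A$ at its regular elements, the $T(A)$-annihilator of any subset $S\subseteq T(A)$ is, after clearing denominators, generated by the idempotent of $A$ generating $\ann_A$ of the corresponding subset of $A$ — exactly as in the proof of Theorem \ref{TqrAndIc}.

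For ``$\Leftarrow$'', suppose $A$ is integrally closed in $T(A)$ and that $T(A)$ is real closed, von Neumann regular and Baer. Then $T(A)$ is reduced, hence so is $A$, and — exactly as in the Corollary — \cite{Capco} Theorem 15 makes the real closed regular Baer ring $T(A)$ real closed $*$. Since $T(A)$ is a rational extension of $A$, its complete ring of quotients equals $Q(A)$, so \cite{srcr} Theorem 3 applied to $T(A)$ shows $Q(A)$ is real closed $*$ and $T(A)$ is integrally closed in $Q(A)$; combined with the hypothesis that $A$ is integrally closed in $T(A)$, this gives $A$ integrally closed in $Q(A)$. A second application of \cite{srcr} Theorem 3, now to $A$, yields that $A$ is real closed $*$.

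The only step that is not pure bookkeeping is the last sentence of the ``$\Rightarrow$'' part: Lemma \ref{Baertqr=vNr} and Theorem \ref{rcrs_totq}(iii) deliver ``real closed'' and ``von Neumann regular'' for $T(A)$ for free, but a von Neumann regular ring that is its own total quotient ring need not be Baer, so one genuinely has to transport the Baer property from $A$ up to $T(A)$ using the equality of idempotent sets; everything else is an assembly of already established facts about real closed $*$ rings and their total and complete rings of quotients.
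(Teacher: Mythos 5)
Your proposal is correct and follows essentially the paper's intended argument: the paper gives no separate proof of this Lemma, stating only that it is read off ``from the proof of the Corollary above,'' and your writeup is exactly that reconstruction, using the same chain of ingredients (Lemma \ref{Baertqr=vNr}, Theorem \ref{rcrs_totq}(iii), \cite{Capco} Theorem 15, \cite{srcr} Theorem 3, and the equality $E(A)=E(T(A))$ for Baer $A$). Your explicit transport of the Baer property from $A$ to $T(A)$ in the forward direction is the one detail the paper leaves tacit, and you handle it correctly.
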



\noindent Now we show one way how a real closure $*$ of a reduced ring can be found.

\begin{cor}\label{rcrs_poring_Q} If $A$ is a reduced poring and $B$ is a rationally complete real closed ring (thus also real 
closed $*$, see \cite{Capco} Theorem 15) such that 
$A$ is a sub-poring of it and $B$ is an essential extension of $A$. Then 
$\ic(A,B)$ is a real closure $*$ of $A$.
\end{cor}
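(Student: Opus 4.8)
The plan is to verify that $\ic(A,B)$ satisfies the intrinsic characterization of a real closure $*$ that was distilled in Lemma \ref{rcrsIffTqr}: a poring is real closed $*$ exactly when it is integrally closed in its total quotient ring and that total quotient ring is a real closed regular (Baer) ring. So set $C:=\ic(A,B)$, the integral closure of $A$ in $B$. First I would record the easy half: $C$ is by construction integrally closed in $B$, and since $B$ is reduced, $C$ is reduced. Because $B$ is an essential (rational) extension of $A$ and $A\subseteq C\subseteq B$, the extension $B/C$ is again essential, hence rational; therefore $T(C)$ embeds naturally into $B$, in fact $T(C)=\ic(A,B)$-elements of $B$ whose denominators in $A$ are regular, so $T(C)$ sits between $C$ and $B$. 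The key point will be to identify $T(C)$ with a ring we already understand: since $B$ is rationally complete and $C$ is a rational (essential) subring of $B$, the complete ring of quotients $Q(C)$ equals $B$; and $T(C)$ is an intermediate ring of $C$ and $Q(C)=B$.

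Next I would show $T(C)$ is von Neumann regular. Since $B=Q(C)$ is rationally complete and real closed, it is in particular regular and Baer, so the idempotents needed to split annihilators are available in $B$; the content is that they already lie in $T(C)$. Here I would invoke that $C$ is integrally closed in $B$: for $a,b\in C$ the annihilator $\ann_B(aB+bB)$ is generated by an idempotent $e\in E(B)$, and $e$, being integral over $C$, lies in $C$; this makes $C$ itself a Baer ring, and then Lemma \ref{Baertqr=vNr} gives that $T(C)$ is von Neumann regular. With $T(C)$ regular and $C$ integrally closed in $B=Q(C)\supseteq T(C)$, the ring $C$ is automatically integrally closed in $T(C)$.

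It remains to see that $T(C)$ is a real closed ring. This is exactly the situation of Theorem \ref{rcrs_totq}(iii): $C$ is a reduced commutative unitary ring with $T(C)$ von Neumann regular, and $C$ is integrally closed in the real closed regular ring $B$ with $T(C)$ an intermediate ring of $C$ and $B$. Part (iii) of that theorem then yields that $T(C)$ is a real closed ring. Combining: $C=\ic(A,B)$ is integrally closed in $T(C)$ and $T(C)$ is a real closed regular (Baer) ring, so by Lemma \ref{rcrsIffTqr} the ring $C$ is real closed $*$. Finally, $C$ is an integral, essential (hence algebraic and rational) extension of the poring $A$ with the induced ordering, and any real closed $*$ ring algebraic over $A$ inside such an extension is minimal with this property, so $\ic(A,B)$ is a real closure $*$ of $A$.

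The main obstacle I anticipate is the bookkeeping around quotient rings: one must check carefully that passing from $A$ to $C=\ic(A,B)$ does not disturb essentiality or rational completeness, so that $Q(C)=B$ and $T(C)$ genuinely lands between $C$ and $B$ as required by the hypotheses of Theorem \ref{rcrs_totq}(iii) and Lemma \ref{Baertqr=vNr}. Once these identifications are in place, everything else is a direct citation of the results already proved above; in particular no new order-theoretic argument is needed beyond noting that the orderings throughout are the (unique) weakest ones, as in the proof of the preceding Corollary.
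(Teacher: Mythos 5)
Your proposal is correct and follows essentially the same route as the paper: both situate $T(\ic(A,B))$ as an intermediate ring of $\ic(A,B)$ and $B$ via rational completeness (the paper invokes Storrer's Satz to embed $Q(\ic(A,B))$ into $B$), both derive the Baer property from integral closedness (the paper cites Mewborn's Proposition 2.5, you reprove the idempotent argument), and both apply Theorem~\ref{rcrs_totq}(iii) to get that the total quotient ring is real closed. The only cosmetic difference is the finish: you stop at $T(\ic(A,B))$ and cite Lemma~\ref{rcrsIffTqr}, while the paper climbs up to $Q(\ic(A,B))$ and applies Dai's theorem twice --- but Lemma~\ref{rcrsIffTqr} is itself a repackaging of that argument, so the two are equivalent.
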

\begin{proof}
Denote $\bar A := \ic(A,B)$. By Storrer's Satz one has the following commutative diagrams 
$$
\begindc{\commdiag}[1]
\obj(0,50)[A]{$A$}
\obj(80,50)[B]{$B$}
\obj(0,0)[C]{$\bar A$}
\obj(80,0)[D]{$Q(\bar A)$}
\mor{A}{B}{}[1,3]
\mor{D}{B}{}[1,3]
\mor{A}{C}{}[1,3]
\mor{C}{D}{}[1,3]
\enddc
$$
where all mappings above are canonical and the mapping $Q(\bar A) \rightarrow B$ is a monomorphism (of porings) 
as a result of Storrer's Satz. We therefore regard every poring in the commutative diagram above as a sub-poring
of $B$. Because $\bar A$ is integrally closed in $B$ it is integrally closed in $Q(\bar A)$, thus $\bar A$ is Baer (see
\cite{Mewborn} Proposition 2.5). It remains, 
by \cite{srcr} Theorem 3 (which we also partially needed to confirm, see \cite{Capco}),
to show that $Q(\bar A)$ is actually a real closed $*$ von Neumann regular ring. 

Now since $T(\bar A)$ is an intermediate ring of $Q(\bar A)$ and $\bar A$, 
it must also be an intermediate ring of $\bar A$ and $\bar B$. $\bar A$ is integrally closed in $B$ and so
by Theorem \ref{rcrs_totq} (iii) we conclude that $T(\bar A)$ is a real closed ring, and because $\bar A$ is Baer
we can also conclude that $T(\bar A)$ is a real closed $*$ ring (see \cite{Capco} Theorem 15). The complete ring of quotients of $T(\bar A)$ is 
also $Q(\bar A)$ and by \cite{srcr} Theorem 3 we know then that $Q(\bar A)$ must be a real closed $*$ ring.
\end{proof}

\begin{prop}\label{srcr_Toq1}
Let $B$ be a Baer reduced poring, then there is a bijection between the following sets
\begin{enumerate}
\item $\mathcal S := \{C\sep C$ is a real closure $*$ of $B\}/\sim$ where $C_1\sim C_2$ iff $C_1\cong_A C_2$
\item $\mathcal T := \{C'\sep C'$ is a real closure $*$ of $T(B)\}/\wr$ where $C_1'\wr C_2'$ iff $C_1'\cong_{T(A)} C_2'$
\end{enumerate}
\end{prop}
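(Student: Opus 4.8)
The plan is to build mutually inverse maps $\Psi\colon\mathcal S\to\mathcal T$ and $\Xi\colon\mathcal T\to\mathcal S$ out of the total--quotient--ring construction and integral closure, reading off the required properties from Lemmas \ref{Baertqr=vNr}, \ref{rcrsIffTqr} and Theorem \ref{rcrs_totq}(iii). Throughout I use that a real closure $*$ is an essential integral extension, that a reduced ring of Krull dimension $0$ is von Neumann regular, and that a von Neumann regular ring equals its own total quotient ring. Given a real closure $*$ $C$ of $B$: by Lemma \ref{rcrsIffTqr}, $C$ is Baer and integrally closed in $T(C)$, and $T(C)$ is a real closed regular ring (regularity of $T(C)$ is also Lemma \ref{Baertqr=vNr}); since $C$ is essential over $B$, non-zero-divisors of $B$ remain non-zero-divisors in $C$ (if $bc=0$ with $b$ a non-zero-divisor of $B$ and $0\neq c\in C$, then $\ann_C(b)$ is a nonzero $B$-submodule of $C$, so $0\neq\ann_C(b)\cap B$, a contradiction), so there is a canonical poring embedding $T(B)\hookrightarrow T(C)$, and $T(C)$ is essential over $C$, hence over $B$, hence over $T(B)$. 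Put $D:=\ic(T(B),T(C))$; it is integral and essential over $T(B)$, hence zero-dimensional, hence (being reduced) von Neumann regular, so $T(D)=D$. Being reduced and integrally closed in the real closed regular ring $T(C)$, with $T(D)=D$ an intermediate ring between $D$ and $T(C)$, $D$ is a real closed ring by Theorem \ref{rcrs_totq}(iii), hence real closed $*$ because it is regular. So $D$ is a real closure $*$ of $T(B)$, and we set $\Psi([C]):=[\ic(T(B),T(C))]$; a $B$-poring-isomorphism $C_1\to C_2$ extends to $T(C_1)\to T(C_2)$, fixes $T(B)$, and so carries $\ic(T(B),T(C_1))$ onto $\ic(T(B),T(C_2))$, so $\Psi$ respects the equivalence relations $\sim$ and $\wr$.

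Conversely let $C'$ be a real closure $*$ of $T(B)$. It is integral over $T(B)$, hence zero-dimensional, hence (being reduced) von Neumann regular, so $T(C')=C'$. View $B\subseteq T(B)\subseteq C'$ and put $\bar B:=\ic(B,C')$, which is integral and essential over $B$, as $C'$ is essential over $T(B)$ and hence over $B$. I claim $T(\bar B)=C'$: non-zero-divisors of $\bar B$ are non-zero-divisors of $C'$ (the same essentiality argument, $C'$ being essential over $\bar B$), hence units of the regular ring $C'$, so $T(\bar B)\hookrightarrow C'$; and given $q\in C'$, an integral equation $q^n+t_{n-1}q^{n-1}+\dots+t_0=0$ over $T(B)$ together with a common denominator $s\in B$ (a non-zero-divisor, hence a unit of $T(B)\subseteq C'$) with $st_i\in B$ for all $i$ shows, after multiplying by $s^n$, that $sq$ is integral over $B$; thus $sq\in\bar B$ and $q=(sq)/s\in T(\bar B)$. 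Hence $T(\bar B)=C'$, and since $\bar B$ is integrally closed in $C'=T(\bar B)$, which is real closed and regular, Lemma \ref{rcrsIffTqr} makes $\bar B$ real closed $*$; so $\bar B$ is a real closure $*$ of $B$, and $\Xi([C']):=[\ic(B,C')]$ is well defined (a $T(B)$-poring-isomorphism fixes $B$ and preserves integral closures).

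That $\Psi$ and $\Xi$ are inverse to each other is then formal. With $C$ a real closure $*$ of $B$ and $D=\ic(T(B),T(C))$ one has $C\subseteq D\subseteq T(C)$, and $\ic(B,D)=C$ because any element of $D$ integral over $B$ is integral over $C$, which is integrally closed in $T(C)$; thus $\Xi\Psi=\mathrm{id}$. With $C'$ a real closure $*$ of $T(B)$ and $\bar B=\ic(B,C')$ one has $T(\bar B)=C'$ by the computation above, whence $\ic(T(B),T(\bar B))=\ic(T(B),C')=C'$, the last equality because $C'$ is integral over $T(B)$; thus $\Psi\Xi=\mathrm{id}$. Hence $\Psi\colon\mathcal S\to\mathcal T$ is a bijection with inverse $\Xi$.

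The one genuinely non-formal ingredient is the appeal to Theorem \ref{rcrs_totq}(iii), which is what lets $\Psi$ land in $\mathcal T$ without having to settle whether $T(C)$ is already integral over $T(B)$: we replace $T(C)$ by its integral-closure hull $D=\ic(T(B),T(C))$ over $T(B)$ and the theorem tells us $D$ is again real closed. The remaining work is bookkeeping: one must check that "real closure $*$" really is an essential integral extension, so that all the essential-extension chains above are legitimate; and one must keep track of the canonical partial orderings, verifying that $T(B)\hookrightarrow D$ and $B\hookrightarrow\bar B$ are poring embeddings with the correct positive cones, where the fact that on a real closed $*$ ring the positive cone is exactly the set of squares (\cite{SM} Proposition 12.4(c)) makes the default partial orderings on the total quotient rings transparent. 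Alternatively one could route the bijection through maximal partial orderings using Theorems \ref{maxpo} and \ref{vNr_maxpo} for the regular ring $T(B)$, but this still requires a dictionary between real closures $*$ of the Baer ring $B$ and its data on $T(B)$, which is precisely what the $T(\cdot)/\ic$ pair above supplies directly.
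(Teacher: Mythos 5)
Your proof is correct and follows essentially the same route as the paper's: you use the same two constructions, $C\mapsto \ic(T(B),T(C))$ and $C'\mapsto \ic(B,C')$, merely packaged as an explicit pair of mutually inverse maps rather than as one map $\Phi$ shown to be well defined, injective and surjective. The only real difference is that you re-derive from Theorem \ref{rcrs_totq}(iii), Lemma \ref{Baertqr=vNr} and Lemma \ref{rcrsIffTqr} the facts (that $\ic(T(B),T(C))$ is a real closure $*$ of $T(B)$ and that $\ic(B,C')$ is a real closure $*$ of $B$) which the paper instead imports from \cite{Capco2} Proposition 6 and Corollary \ref{rcrs_poring_Q}, making your version somewhat more self-contained.
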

\begin{proof}
Let us define a map $\Phi: \mathcal S \rightarrow \mathcal T$. So let $C$ be a real closure $*$ of $B$ then we 
have the following canonical morphism of poring
$$B\lhrarrow C \lhrarrow T(C)$$
By Lemma \ref{rcrsIffTqr}, $T(C)$ is a real closed $*$ ring. Now $T(A)$ is actually a sub-poring of $T(C)$ because all regular element
of $A$ hava a (unique) inverse in $T(C)$. Moreover $T(A)$ is a Baer von Neumann regular ring (by Lemma \ref{Baertqr=vNr}).
We now define
$$C' := \ic(T(A),T(C))$$
This is by (\cite{Capco2} Proposition 6) a real closure $*$ of $T(A)$. So we finally can define $\Phi(C/\sim) = C'/\wr$.
We claim $\dots$

\vspace{5mm}\noindent
\underline{Claim 1: $\Phi$ is well-defined}. If $C_1\cong_A C_2$ then we have the following commutative diagram in the category of 
porings
$$
\begindc{\commdiag}[1]
\obj(0,40)[A]{$A$}
\obj(60,60)[C1]{$C_1$}
\obj(60,20)[C2]{$C_2$}
\obj(120,80)[T1]{$T(C_1)$}
\obj(120,0)[T2]{$T(C_2)$}
\obj(20,80)[TA1]{$T(A)$}{}
\obj(20,0)[TA2]{$T(A)$}{}
\mor{A}{C1}{}
\mor{A}{C2}{}
\mor{C1}{T1}{}
\mor{C2}{T2}{}
\mor{T1}{T2}{$f$}
\mor{A}{TA1}{}
\mor{A}{TA2}{}
\mor(20,80)(100,80){}
\mor(20,0)(100,0){}
\enddc
$$
where all the maps are canonical essential extensions, 
with $f:T(C_1)\rightarrow T(C_2)$ being the canonical extension of the $A$-isomorphism 
$$C_1\stackrel{\sim}{\longrightarrow}C_2$$
Thus $f|A$ is no other than the identity morphism from $A$ to $A$. And one easily checks that $f|T(A)$ is an identity map (as $T(A)$ is
no other than the localization of $A$ with respect to the multiplicative set consisting of the regular elements of $A$). Moreover if we 
define 
$$C_1' := \ic(T(A),T(C_1))$$
$$C_2' := \ic(T(A),T(C_2))$$
then we realize that $C_1'\cong_{T(A)}C_2'$. Thus we have shown that $C_1'\wr C_2'$ or in other words 
$$\Phi(C_1/\sim)=\Phi(C_2/\sim)$$

\vspace{5mm}\noindent
\underline{Claim 2: $\Phi$ is surjective}. Let $C'$ be a real closure $*$ of $T(A)$, then we have the following canonical injection
$$A\hookrightarrow T(A) \hookrightarrow C' \hookrightarrow Q(C')$$
by Corollary \ref{rcrs_poring_Q} we know that $\ic(A,Q(C'))$ is a real closure $*$ of $A$ and since $C'$ itself is a real closed $*$ ring, we have
$$\ic(A,C')=\ic(A,Q(C'))$$
we can then set $C:=\ic(A,C')$. The claim is that $\Phi(C/\sim)=C'/\wr$. 

Note that $T(A)$ is regular, so $C'$ must be regular 
(this can be seen for instance in \cite{Capco2} Proposition 7, or \cite{raphael} Lemma 1.9) and so we actually 
have $T(C')=C'$. Moreover we observe that 
$T(A)$ is a subring of $T(C)$ we thus have the following commutative diagram in the category of porings 
$$
\begindc{\commdiag}[1]
\obj(0,50)[A]{$A$}
\obj(60,50)[C]{$C$}
\obj(60,0)[TA]{$T(A)$}
\obj(120,50)[TC]{$T(C)$}
\obj(170,50)[C2]{$C'$}
\mor{A}{C}{}[1,3]
\mor{A}{TA}{}[1,3]
\mor{TA}{TC}{}[1,3]
\mor{C}{TC}{}[1,3]
\mor{TC}{C2}{}[1,3]
\enddc
$$
$C$ being a 
real closed $*$ ring implies 
(by Lemma \ref{rcrsIffTqr}) that $T(C)$ is a real closed $*$ ring. Thus we can conclude that 
$$\ic(T(A),T(C))=\ic(T(A),C')= C'=T(C)$$
This not only shows that $\Phi$ is surjective, but also the fact that  $\Phi(C/\sim)=T(C)/\wr$ for any real closure $*$, $C$, of $A$.

\vspace{5mm}\noindent
\underline{Claim 3: $\Phi$ is injective}. Let $C_1$ and $C_2$ be two real closure $*$ of $A$ so that 
(using the extra information we have learned in the previous proof) 
$$T(C_1) \cong_{T(A)} T(C_2)$$
We have immediately 
$$T(C_1)\cong_A T(C_2) \Rightarrow C_1\cong_A C_2 \Rightarrow C_1\sim C_2$$
\end{proof}

\begin{cor}\label{Baer_red_char1}
Let $B$ be a reduced Baer poring. Then there is a bijection between the following sets
\begin{enumerate}
\item $\mathcal S := \{C \sep C$ is a real closure $*$ of $B\}/\sim$ \\ where 
for any two real closure $*$ of $B$, $C_1$ and $C_2$, one defines $C_1\sim C_2$ iff there is an
$B$-poring-isomorphism  between $C_1$ and $C_2$
\item $\mathcal T := \{C' \sep C'$ is a real closure $*$ of $T(B)\}/\wr$ \\ where 
for any two real closure $*$ of $T(B)$, $C_1'$ and $C_2'$, one defines $C_1'\wr C_2'$ iff there is an
$T(B)$-poring-isomorphism  between $C_1'$ and $C_2'$
\item $\{P\subset B \sep P\supset B^+ $ and $P$ is a maximal partial ordering of $B\}$
\end{enumerate}
\end{cor}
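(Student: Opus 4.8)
The plan is to obtain the bijection between (1) and (3) as a composite of three bijections that have already been established, all routed through the total quotient ring $T(B)$.

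First, Proposition \ref{srcr_Toq1} already gives a bijection $\mathcal S \longleftrightarrow \mathcal T$ between the real closures $*$ of $B$ (up to $B$-isomorphism) and the real closures $*$ of $T(B)$ (up to $T(B)$-isomorphism); this is precisely the identification of (1) with (2), so nothing new is needed there. Next, $B$ is reduced and Baer, so $T(B)$ is von Neumann regular by Lemma \ref{Baertqr=vNr}. If $B$ is not real then all three sets are empty (a poring admitting a real closure $*$, or a maximal partial ordering extending $B^+$, is real, and a reduced ring is real exactly when its total quotient ring is), and the statement is vacuous; hence we may assume $B$, and therefore $T(B)$, is real. Applying Theorem \ref{vNr_maxpo} to the real regular ring $A := T(B)$, its items (1) and (3) yield a bijection between $\mathcal T$ and the set of maximal partial orderings $P$ of $T(B)$ with $P \supseteq T(B)^+$, where $T(B)^+$ is the default partial ordering of $T(B)$, i.e. the weakest one extending $B^+$, as fixed in the opening Notation.

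Finally, $T(B)$ is a reduced commutative ring and a rational extension of $B$ (indeed a localization of it at its regular elements), so the relative form of Theorem \ref{maxpo}(ii) applies with the given partial ordering $B^+$ of $B$: since $T(B)^+$ is exactly the weakest partial ordering of $T(B)$ extending $B^+$, we get a bijection between the maximal partial orderings of $T(B)$ containing $T(B)^+$ and the maximal partial orderings of $B$ containing $B^+$, which is the set (3). Composing the bijections $\mathcal S \longleftrightarrow \mathcal T \longleftrightarrow \{P \supseteq T(B)^+\} \longleftrightarrow \{P \supseteq B^+\}$ produces the asserted bijection between (1) and (3); the bijection between (1) and (2) is the first arrow alone, and between (2) and (3) the composite of the last two.

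The only delicate points are bookkeeping: checking that the partial ordering of $T(B)$ invoked in Theorem \ref{vNr_maxpo} (namely $T(B)^+$) is literally the same one used in the relative statement of Theorem \ref{maxpo}(ii) — which it is, by the Notation convention that $T(B)^+$ is the weakest partial ordering of $T(B)$ extending $B^+$ — and disposing of the non-real case so that Theorem \ref{vNr_maxpo}, whose hypotheses require $T(B)$ to be real, may legitimately be applied. Beyond these, no ideas are needed that are not already contained in the cited results.
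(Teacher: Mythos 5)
Your proposal is correct and follows essentially the same route as the paper: Proposition \ref{srcr_Toq1} for the bijection between (1) and (2), then Theorem \ref{vNr_maxpo} applied to the regular ring $T(B)$ composed with Theorem \ref{maxpo}(ii) for the passage from (2) to (3). Your extra care about the non-real (vacuous) case and about matching the partial ordering $T(B)^+$ across the two cited theorems is a reasonable tightening of details the paper leaves implicit, but does not change the argument.
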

\begin{proof}
"1$\Leftrightarrow$ 2" Proven in Propositon \ref{srcr_Toq1}

"2$\Leftrightarrow$ 3"  We know that there is a bijection between the real closure $*$ of a von Neumann 
regular ring and its maximal partial ordering, see Theorem \ref{vNr_maxpo}. Now the bijection between the set of maximal partial
ordering of $T(B)$ containing $T(B)^+$ and the set of maximal partial orderings of $B$ containing $B^+$ has been shown in
Theorem \ref{maxpo}(ii).
\end{proof}

\begin{defi}
If $A$ is a reduced commutative unitary ring, then we shall call any element of $Q(A)$ a \emph{rational element} or a
\emph{fraction} of $A$.
\end{defi}

\begin{lemma}\label{deck_transform_Baer} 1.) Let $A$ be a reduced ring and suppose that $C_1,C_2$ be reduced rings having $B(A)$ as subring. If 
$$f : C_1\stackrel{\sim}{\rightarrow} C_2$$
is an $A$-isomorphism then it is a $B(A)$-isomorphism.
\item[2.)] Let $A$ be a reduced poring and  $C$ be any real closure $*$ of $A$, one may then consider $B(A)$ as an intermediate
ring of $A$ and $C$ and $C$ itself is also a real closure $*$ of $B(A)$.
\end{lemma}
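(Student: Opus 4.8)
For part 1.), the plan is to exploit the fact that $B(A)$, the Baer hull of $A$, is obtained from $A$ by adjoining idempotents, together with the characterization of idempotents inside a rational (in fact essential) extension. Concretely, $B(A)$ is an essential extension of $A$ sitting inside $Q(A)$, and every element of $B(A)$ is an $A$-linear combination of idempotents of $Q(A)$ that lie in $B(A)$; these idempotents are determined by annihilator ideals of $A$, namely $e_I$ with $\ann_A(1-e_I)=\ann_A(I)$ for finitely generated (or arbitrary) ideals $I\subseteq A$. The key point is that an $A$-isomorphism $f:C_1\to C_2$ must send an idempotent $e\in C_1$ with $\ann_A(1-e)=\ann_A(I)$ to an idempotent $f(e)\in C_2$ with the same property $\ann_A(1-f(e))=\ann_A(I)$, because $f$ commutes with multiplication by elements of $A$ and is injective. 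In a reduced ring an idempotent is uniquely determined by its annihilator in $A$ once $A\subseteq C_i$ is an essential (rational) extension: if $e,e'$ are idempotents with $\ann_A(1-e)=\ann_A(1-e')$ then $e(1-e')$ and $e'(1-e)$ annihilate a dense ideal, hence are $0$ by reducedness and essentiality, forcing $e=e'$. Therefore $f$ fixes $B(A)$ pointwise, i.e. $f$ restricts to the identity on $B(A)$, which is exactly the statement that $f$ is a $B(A)$-isomorphism.

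For part 2.), I would argue as follows. Let $C$ be a real closure $*$ of $A$. Since $C$ is real closed $*$ it is in particular a Baer ring (by Lemma \ref{rcrsIffTqr}, its total quotient ring is real closed regular Baer, and being integrally closed there makes $C$ Baer — or invoke that real closed $*$ rings are Baer directly). Hence $E(C)$ contains idempotents realizing all the annihilator ideals of $A$, so the subring of $C$ generated by $A$ and these idempotents is exactly (an isomorphic copy of) $B(A)$; by the universal property of the Baer hull as the smallest Baer essential extension inside any essential extension, $B(A)$ embeds canonically into $C$ as an intermediate ring $A\subseteq B(A)\subseteq C$. (One uses here that $C$ is an essential extension of $A$, which holds for real closures $*$.) It then remains to check that $C$ is a real closure $*$ of $B(A)$: $C$ is still integrally closed in $Q(C)$ with $Q(C)=Q(B(A))$ real closed regular (the complete ring of quotients does not change when passing from $A$ to the essential extension $B(A)$), and $C$ is an essential extension of $B(A)$ since it is one of $A$. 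Finally, the "minimality/initiality" required of a real closure $*$ over $B(A)$ follows from that over $A$ together with part 1.): any two candidate real closures $*$ of $B(A)$ would in particular be $A$-isomorphic, and by part 1.) that $A$-isomorphism is automatically a $B(A)$-isomorphism, so the equivalence class is unchanged.

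The main obstacle I anticipate is pinning down precisely the intrinsic characterization of $B(A)$ inside an arbitrary reduced essential extension $C_i$ — i.e. that $B(A)$ is canonically the $A$-subalgebra generated by the idempotents $e_I$ and that these $e_I$ are uniquely pinned down by their $A$-annihilators. This is where reducedness and essentiality of the extension are both essential, and it is the crux that makes $f$ forced to act as the identity; everything else (Baer-ness of $C$, invariance of $Q(-)$ under essential extensions, and the descent of the real closure $*$ property) is comparatively formal given the results already established, in particular Lemma \ref{rcrsIffTqr} and Corollary \ref{rcrs_poring_Q}.
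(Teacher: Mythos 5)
Your argument is in essence the paper's, just organized differently. For part 1 both proofs rest on the fact that every idempotent $e$ of $B(A)$ is detected by $A$: you encode this via the annihilators $\ann_A(e)$, $\ann_A(1-e)$ and the dense ideal they generate, whereas the paper works with a single witness $a\in A$ with $ae\in A\backslash\{0\}$ and computes $f(ae)=ae$ to force a contradiction; either way the conclusion is that $f$ fixes $E(B(A))$, hence all of $B(A)=A[E(B(A))]$. Part 2 is the same in both: Storrer's theorem embeds $Q(A)$ into $Q(C)$, a real closed $*$ ring $C$ contains all idempotents of $Q(C)\supset Q(A)$, hence contains $B(A)$ by Mewborn's description of the Baer hull, and integrality and essentiality over $B(A)$ are inherited from those over $A$. (Your closing appeal to a ``minimality/initiality'' of the real closure $*$ is not needed: a real closure $*$ is simply a real closed $*$ ring that is an integral essential extension, and there is no universal property left to verify.)

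One point deserves emphasis. Your uniqueness step --- an idempotent is pinned down by its $A$-annihilators \emph{once $A\subset C_i$ is an essential extension} --- uses a hypothesis the Lemma does not state: $C_1,C_2$ are only assumed to be reduced rings containing $B(A)$. That hypothesis is in fact indispensable, since the literal statement fails without it: take $A=k[x,y]/(xy)$, so that $B(A)=k[x]\times k[y]$, let $C=k[x]\times k[y]\times k\times k$ contain $B(A)$ via $(u,v)\mapsto (u,v,u(0),v(0))$, and let $\sigma$ swap the last two coordinates; then $\sigma$ is an $A$-automorphism of the reduced ring $C$ which moves the idempotent $(1,0)\in B(A)$. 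The paper's own proof hides the same dependence in the step ``$ae_2\in A\backslash\{0\}$, hence $e_2$ is a rational element of $A$, hence $e_2\in E(B(A))$'', which is valid only when $C_2$ is a rational (essential) extension of $A$ so that $e_2$ can be located inside $Q(A)$. Since in every application the $C_i$ are real closures $*$ of $A$ and therefore essential extensions, nothing is lost --- but your instinct to make essentiality explicit is correct, and strictly speaking it should be added to the hypotheses of part 1.
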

\begin{proof}
1.) For brevity we write $B:=B(A)$. First we show that $f(E(B))\subset E(B)$. We do know that $f(E(B))\subset E(C_2)$, 
now let $e_1\in E(B)\wo{0}$ and define $e_2:=f(e_1)\in E(C_2)$. We know that $e_1$ is a rational element of $A$, 
i.e. there exists an $a\in A$ such that $ae_1\in A\wo{0}$. Thus 
$$ae_1=f(ae_1)=af(e_1)=ae_2\in A\wo{0}$$
meaning that $e_2$ is also a rational element of $A$, in other words $e_2\in E(B)$ (see \cite{Mewborn} Proposition 2.5). 
Thus we have shown that $f(E(B))\subset E(B)$.

If $f|B$ were not the identity map then, because of 
 \cite{Mewborn} Proposition 2.5, there is an $e_1\in E(B)$ such that $f(e_1)\neq e_1$. Define 
$e_2:=f(e_1)\in E(B)$, then either $(1-e_1)e_2\neq 0$ or $(1-e_2)e_1\neq 0$. 
Suppose the former case holds
and define $e_3:= (1-e_1)e_2 \in E(B)$. Then there exists an 
$a\in A\wo{0}$ such that $ae_3, ae_2\in A\wo{0}$. So we get 
$f(a_3e_3)=a_3e_3\neq 0$ and yet 
$$f(a_3e_3)=f(a_3(1-e_1)e_2) = f(a_3e_2)f(1-e_1) = a_3e_2(1-e_2)=0$$
a contradiction. If the former case holds, then we get a contradiction in a similar manner.

\vspace{5mm}
\item[2.)] This has been discussed throughout our study of regular rings, but we shall give a formal prove for all reduced
rings here. The keyword is \emph{Storrer's Satz} (see \cite{Capco3} Theorem 1). By Storrer's Satz we have the following 
commutative diagram consisting of canonical maps (except for the lower horizontal map which is due to Storrer's Satz).
$$
\begindc{\commdiag}[1]
\obj(0,50)[A]{$A$}
\obj(80,50)[B]{$C$}
\obj(0,0)[C]{$Q(A)$}
\obj(80,0)[D]{$Q(C)$}
\mor{A}{B}{}[1,3]
\mor{B}{D}{}[1,3]
\mor{A}{C}{}[1,3]
\mor{C}{D}{}[1,3]
\enddc
$$
$C$ being real closed $*$ must contain all the idempotents of $Q(C)$, thus it contains all the idempotents of $Q(A)$ and has
$A$ as a subring. This implies (by \cite{Mewborn} Proposition 2.5) that $C$ has $B(A)$ as a subring. That $C$ is an integral
and essential extension of $B(A)$ is then clear. $C$ being real closed $*$, means that $C$ is a real closure $*$ of $B(A)$ 
as well.
\end{proof}

\begin{theorem}\label{redring-rcrs} Let $A$ be a reduced poring, then there is a bijection between the following sets
\begin{itemize}
\item $\mathcal A := \{C\sep C$ is a real closure $*$ of $A\}/\sim$ where $C_1\sim C_2$ iff $C_1\cong_A C_2$
\item $\mathcal B := \{C'\sep C'$ is a real closure $*$ of $B(A)\}/\wr$ where $C_1'\wr C_2'$ iff $C_1'\cong_{B(A)} C_2'$
\item $\mathcal P_{B(A)}:= \{P\subset B(A) \sep P\supset B(A)^+ $ and $P$ is a maximal partial ordering of $B(A)\}$
\item $\mathcal P_{A}:= \{P\subset A \sep P\supset A^+ $ and $P$ is a maximal partial ordering of $A\}$
\end{itemize}
\end{theorem}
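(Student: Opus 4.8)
The plan is to split the four-way bijection into three consecutive bijections $\mathcal{A}\leftrightarrow\mathcal{B}\leftrightarrow\mathcal{P}_{B(A)}\leftrightarrow\mathcal{P}_A$, each of which is essentially a repackaging of a result already established above. For the first one, I would use Lemma \ref{deck_transform_Baer}. By part (2) of that lemma every real closure $*$ $C$ of $A$ contains $B(A)$ as an intermediate ring $A\subseteq B(A)\subseteq C$ and is itself a real closure $*$ of $B(A)$; so $C\mapsto C$ gives a map on the underlying sets. Conversely, if $C'$ is a real closure $*$ of $B(A)$, then since $B(A)$ is an integral and essential extension of $A$ (the Baer hull of the reduced ring $A$), $C'$ is integral and essential over $A$, and being real closed $*$ it is a real closure $*$ of $A$; this is the surjectivity of the map. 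Well-definedness and injectivity of the induced map $\mathcal{A}\to\mathcal{B}$ come from Lemma \ref{deck_transform_Baer}(1): an $A$-isomorphism between reduced rings containing $B(A)$ is automatically a $B(A)$-isomorphism, while a $B(A)$-isomorphism restricts to an $A$-isomorphism because $A\subseteq B(A)$. Hence $\mathcal{A}$ and $\mathcal{B}$ are in natural bijection.

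For the second bijection, note that $B(A)$, equipped with $B(A)^+$ (the weakest partial ordering extending $A^+$), is a reduced Baer poring, so Corollary \ref{Baer_red_char1} applied with $B=B(A)$ (the equivalence of items 1 and 3 there) already furnishes a bijection between the real closures $*$ of $B(A)$ modulo $B(A)$-isomorphism and the maximal partial orderings of $B(A)$ containing $B(A)^+$, i.e. between $\mathcal{B}$ and $\mathcal{P}_{B(A)}$. For the third bijection, observe that $B(A)$ is an intermediate ring of $A$ and $Q(A)$, hence a rational extension of $A$; thus Theorem \ref{maxpo}(ii), in its version with a fixed base partial ordering $A^+$, gives a bijection between the maximal partial orderings of $B(A)$ containing $B(A)^+$ and the maximal partial orderings of $A$ containing $A^+$, i.e. between $\mathcal{P}_{B(A)}$ and $\mathcal{P}_A$. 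Composing the three bijections yields the statement.

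The only genuinely delicate point I expect is the surjectivity step in the first bijection — confirming that an arbitrary real closure $*$ of $B(A)$ is really a real closure $*$ of $A$ — which relies on transitivity of integral extensions and of essential extensions together with the fact that $B(A)$ is both over $A$; everything else is a direct citation. It is also worth pausing to check that the three intermediate labelled sets coincide literally with the sets named in the theorem (in particular that $B(A)^+$ is read throughout as the weakest partial ordering extending $A^+$, so that the displayed generating formula in Theorem \ref{maxpo}(ii) applies), so that the composite is a bijection among the four sets exactly as written.
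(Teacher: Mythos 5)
Your proposal matches the paper's proof essentially verbatim: the same decomposition into the three bijections $\mathcal A\leftrightarrow\mathcal B$ (via Lemma \ref{deck_transform_Baer}), $\mathcal B\leftrightarrow\mathcal P_{B(A)}$ (via Corollary \ref{Baer_red_char1}), and $\mathcal P_{B(A)}\leftrightarrow\mathcal P_A$ (via Theorem \ref{maxpo}(ii)). The extra care you take over well-definedness and surjectivity of the first map is sound and consistent with what the paper asserts more briefly.
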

\begin{proof}
In our proof, when we say \emph{isomorphism} we mean it in the category of porings.

\vspace{5mm}\noindent
"$\mathcal A \bij \mathcal B$" Any real closure $*$ of $A$ is also a real closure $*$ of $B(A)$ (see Lemma  \ref{deck_transform_Baer} (2)), this is
also true vice versa, as $B(A)$ itself is an essential and integral extension of $A$. If two such real closure $*$ are 
$A$-isomorphic, then they are by  Lemma \ref{deck_transform_Baer} (1) also $B(A)$-isomorphism. Any $B(A)$-isomorphism 
is also trivially an $A$-isomorphism. Thus the bijection is just the canonical map 
$$(C/\sim)\, \longmapsto \,(C/\wr)\qquad C\textrm{ is a real closure $*$ of } A$$

\vspace{5mm}\noindent
"$\mathcal B \bij \mathcal P_{B(A)}$" This is from Corollary \ref{Baer_red_char1}

\vspace{5mm}\noindent
"$\mathcal P_{B(A)} \bij \mathcal P_{(A)}$" This is from Theorem \ref{maxpo} (ii)
\end{proof}

\begin{prop}\label{Prop_essext} Let $B$ be a commutative unitary ring with a subring $A$ and let 
$$\phi:\spec B\rightarrow \spec A$$ be the canonical continuous map and define $X:=\phi(\spec B)$ with
relative topology, then $\dots$
\begin{itemize}
\item[i.] For any $a\in A$ one has 
$\phi(V_B(a))=V_A(a)\cap X$ and $\phi(D_B(a))=D_A(a)\cap X$
\item[ii.] For all $a\in A$ and $\p \in D_A(a)\cap X$ one has $\phi^{-1}(\p) \subset D_B(a)$ 
\item[iii.] If $B$ is reduced and an essential extension of $A$ then $\phi$ induces an irreducible surjection
$$\phi' : \spec B \twoheadrightarrow X$$
with $\phi'(\tilde \p) := \phi(\tilde \p)$ for all $\tilde \p\in \spec B$.
\item[iv.] If $B$ is reduced and an essential extension of $A$, and define $Y:=\minspec A$. Suppose now that 
$Y\subset X$ and set $\tilde Y:= \phi^{-1}(Y)$ then
$\phi$ can be restricted to a map 
$$\tilde Y \thrarrow Y \qquad \tilde\p\mapsto \phi(\tilde\p) \quad\forall\tilde\p\in\tilde Y$$
and this map is an irreducible surjection with relative topologies on the domain and range.
\item[v.] Let $B$, $Y$ and $\tilde Y$ be as in (iv). Then for any $a\in A$ we have the identity that 
$$\phi(D_B(a)\cap \tilde Y)=D_A(a) \cap Y$$ 
Moreover $\tilde Y$ is dense in $\spec B$. 
\end{itemize}
\end{prop}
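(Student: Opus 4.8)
Throughout, the whole proposition is driven by one elementary observation: since $a\in A$, for every $\tilde\p\in\spec B$ we have $a\in\tilde\p$ if and only if $a\in\tilde\p\cap A=\phi(\tilde\p)$. Before anything else I would record two standing facts. First, $A$ is a subring of the reduced ring $B$, hence $A$ is reduced, so $\bigcap\minspec A=0$ and therefore every nonzero element of $A$ lies outside some minimal prime of $A$ (and, similarly, every nonzero element of $B$ lies outside some prime of $B$). Second, ``$B$ is an essential extension of $A$'' means precisely that for every $b\in B\setminus\{0\}$ the ideal $bB$ meets $A$ nontrivially, i.e. there is $a\in A\setminus\{0\}$ with $a\in bB$; and whenever $a\in bB$ one has $V_B(b)\subseteq V_B(a)$, equivalently $D_B(a)\subseteq D_B(b)$.

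Parts (i) and (ii) I would dispatch at once from the opening observation. For (i): $\tilde\p\in V_B(a)\iff\phi(\tilde\p)\in V_A(a)$, so $\phi(V_B(a))\subseteq V_A(a)\cap X$; conversely any $\p\in V_A(a)\cap X$ equals $\phi(\tilde\p)$ for some $\tilde\p\in\spec B$, and that $\tilde\p$ lies in $V_B(a)$. The equality for $D_B(a)$ is the complementary statement, proved the same way. For (ii): if $\phi(\tilde\p)=\p\in D_A(a)$ then $a\notin\tilde\p\cap A$, hence $a\notin\tilde\p$, i.e. $\tilde\p\in D_B(a)$.

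For parts (iii) and (iv) I would first recall that, for a continuous surjection, being an irreducible surjection amounts (beyond surjectivity and continuity) to the condition that every nonempty open subset of the source contains a complete fibre of the map — equivalently, that no proper closed subset maps onto the whole target. Surjectivity of $\phi'$ is immediate (it is the corestriction of $\phi$ onto its image), and surjectivity of the restriction $\tilde Y\to Y$ uses the hypothesis $Y\subseteq X$ exactly here: for $\p\in Y$ the fibre $\phi^{-1}(\p)$ is nonempty and contained in $\tilde Y=\phi^{-1}(Y)$. Continuity is clear. For the fibre condition it suffices to treat a nonempty basic open set $D_B(b)$ in the source; nonemptiness forces $b$ outside some prime of $B$, so $b\ne0$. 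By essentiality choose $a\in A\setminus\{0\}$ with $a\in bB$, so $D_B(a)\subseteq D_B(b)$; by (i), $\phi(D_B(a))=D_A(a)\cap X$, which is nonempty because $a\ne0$ and $B$ is reduced. In case (iii) I pick any $\p\in D_A(a)\cap X$; in case (iv), using that $A$ is reduced, I pick $\p\in\minspec A=Y$ with $a\notin\p$ (legitimate since $a\ne0$), and note $\p\in Y\subseteq X$. In either case (ii) yields $\phi^{-1}(\p)\subseteq D_B(a)\subseteq D_B(b)$, a nonempty fibre inside the given basic open (and inside $\tilde Y$ in case (iv), since $\p\in Y$). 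This is the desired irreducibility.

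For part (v): the inclusion $\phi(D_B(a)\cap\tilde Y)\subseteq D_A(a)\cap Y$ is again the opening observation, while the reverse inclusion uses (ii) — for $\p\in D_A(a)\cap Y$ one has $\p\in X$, so $\phi^{-1}(\p)$ is nonempty, contained in $D_B(a)$ and in $\tilde Y$, and any of its points maps to $\p$. For the density of $\tilde Y$ in $\spec B$ I would simply rerun the argument of (iv): given a nonempty basic open $D_B(b)$ with $b\ne0$, produce $\p\in\minspec A$ with $\emptyset\ne\phi^{-1}(\p)\subseteq D_B(b)$; since $\phi^{-1}(\p)\subseteq\tilde Y$, this shows $D_B(b)\cap\tilde Y\ne\emptyset$ (alternatively: $\minspec A$ is dense in $X$ since $A$ is reduced, and by (iii) $\phi'$ is an irreducible surjection, so $\tilde Y$, being its preimage of $\minspec A$, is dense in $\spec B$). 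The argument is mostly bookkeeping with no single hard step; the points I would be most careful about are confirming that nothing beyond the ``complete fibre in every nonempty open'' characterisation of irreducible surjections is needed (so that closedness of $\phi$ never has to be addressed), invoking the hypothesis $Y\subseteq X$ exactly where a fibre over a minimal prime must be nonempty (without it part (iv) is false), and using essentiality in the sharp form ``$bB\cap A\ne\{0\}$''. Matching the first of these to the paper's precise definition of ``irreducible surjection'' is the only genuine subtlety.
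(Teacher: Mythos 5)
Your proposal is correct and follows essentially the same route as the paper: parts (i) and (ii) from the observation $a\in\tilde\p\Leftrightarrow a\in\phi(\tilde\p)$, and parts (iii)--(v) by using essentiality to replace a nonzero $b\in B$ with some $a\in bB\cap A\setminus\{0\}$ so that $D_B(a)\subset D_B(b)$, then combining (i), (ii) and reducedness of $A$; the only cosmetic difference is that you phrase irreducibility directly as ``every nonempty open contains a complete fibre'' where the paper argues by contradiction with ``no proper closed subset maps onto the target'', and like the paper you do not address closedness of the map.
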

\begin{proof} i. Let $\tilde\p\in \spec B$ and such that $a\in \tilde \p$ then clearly $\phi(\tilde \p)=\p\cap A$ has $a$ in it and so
$\phi(\tilde\p)\in V_A(a)\cap X$. The other containment is equally obvious. Analogously one proves the second equality.

\item[ii.] Suppose $a\in A$ and let $\tilde \p\in \spec B$ such that $\tilde \p\cap A=\p$ and that $\p\in D_A(a)$ then 
if $a\in \tilde \p$ we had get a contradiction since then $a\in \tilde \p \cap A$. 

\item[iii.]  Observe that for any $b\in B\wo{0}$, there is a $c\in B$ such that $bc\in A\wo{0}$ and that 
$D_B(b)\supset D_B(bc)$. So if we try to prove by contradiction, we may assume that there is an $a\in A\wo{0}$ such that
$\phi(V_B(a))=X$, in other words for a $\p\in D_A(a)\cap X$ there is a $\tilde \p\in V_B(a)$ such that 
$\tilde \p\cap A = \p$. But this cannot because in ii. we have shown that 
$$\tilde \p \in \phi^{-1}(\p) \subset D_B(a)$$
a contradiction!

\item[iv.]
\comment{
If $a\in A\wo{0}$ then $D_A(a)\cap Y \neq \emptyset$, since otherwise $V_A(a)\cap Y =Y$ implies that 
$a\in\p$ for all $\p\in\minspec A$ and because $A$ is reduced this implies that $a=0$ which is a contradiction.

\vspace{3mm}\noindent
Now if $a\in A\wo{0}$ and if $\p\in D_A(a)\cap Y$ we know then by the fact that $Y\subset X$ there is a 
$\tilde\p\in \tilde Y$ such that 
$$\phi(\tilde \p)=\tilde \p\cap A= \p$$
So we easily observe that $\tilde \p \in D_B(a)\cap \tilde Y$.

\vspace{3mm}\noindent
We can therefore write the following equivalence for all $a\in A$
$$D_B(a)\cap \tilde Y =\emptyset \Leftrightarrow  D_A(a)\cap Y =\emptyset \Leftrightarrow  D_A(a)\cap X =\emptyset
\stackrel{(ii)}{\Longleftrightarrow}  D_B(a) =\emptyset$$
The middle equivalence is due to the fact that $Y\subset X$ and that our ring is reduced.

\vspace{3mm}\noindent
}
We prove (iv) by contradiction, and 
we may assume that there is an $a\in A\wo{0}$ 
that defines $\tilde Y_1 :=V_B(a)\cap \tilde Y$ and such that $\phi(\tilde Y_1)=Y$. So we get (using i.)
$$\phi(V_B(a)) =V_A(a)\cap X \supset \phi(\tilde Y_1)=Y$$
Since $V_A(a)\cap X$ is closed in $X$ it should contain all the closure points of $Y$ in $X$, thus
$\phi(V_B(a))=X$ but this contradicts (iii).

\item[v.] Clearly (see also (i)) one has for any $a\in A$
$$\phi(D_B(a)\cap \tilde Y)\subset D_A(a) \cap Y$$
The equality of the above is clear for $a=0$ so we need only deal for the case $a\in A\wo{0}$. Let thus $a\in A\wo{0}$ and
suppose $\p \in D_A(a)\cap Y$. Since $Y\subset X$ we know that there is a $\tilde\p\in \tilde Y$ such that 
$$\phi(\tilde \p)=\tilde \p\cap A= \p$$
and we easily see also that $\tilde \p \in D_B(a)\cap \tilde Y$. Thus 
$\p \in \phi(D_B(a)\cap \tilde Y)$ and so we may conclude in the end that
$$\phi(D_B(a)\cap \tilde Y)\subset D_A(a) \cap Y$$

\vspace{3mm}\noindent
Suppose now that $\tilde Y$ is not dense in $\spec B$, this just implies that there is a $b\in B$ such that 
$$D_B(b) \cap \tilde Y = \emptyset$$
Now since $B$ is reduced and is essential over $A$ and since $B$ is reduced, there exists a $b'\in B$ such that
$b'b\in A\wo{0}$. Thus since $D_B(bb')\subset D_B(b)$ we may as well say that there is an $a\in A\wo{0}$ such that
$$D_B(a) \cap \tilde Y = \emptyset$$
But the previous results implies then that 
$$D_A(a) \cap Y=\emptyset $$
This implies that 
$$V_A(a) \cap Y=Y$$
meaning that for any $\p\in\minspec A$ we have $a\in \p$ or in other words (since $A$ is also reduced) we get $a=0$ 
and this is contradiction. Thus we may conclude that $\tilde Y$ is dense in $\spec B$.
\end{proof}

\begin{theorem} \label{adjoin_idemp}
Let $A$ be a reduced poring and $B$ be an over-ring of $A$. Define 
$$ C:=\{\sum_{i=1}^n a_ie_i \sep \forall n\in \N, a_1,\dots,a_n\in A, e_1,\dots,e_n\in E(B)\} $$
Then (in the category of topological spaces)
$$\sper C \cong \spec C \times_{\spec A} \sper A$$
\end{theorem}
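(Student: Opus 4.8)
The plan is to write down an explicit continuous bijection $\Psi$ and then prove it is open. Put $F:=\spec C\times_{\spec A}\sper A$. Restriction of orderings along the inclusion $A\hookrightarrow C$ gives a map $\sper C\to\sper A$, and $A\hookrightarrow C$ also gives $\spec C\to\spec A$; together with the support map $\supp_C$ these assemble into
$$\Psi\colon\sper C\longrightarrow F,\qquad \alpha\longmapsto\bigl(\supp_C(\alpha),\ \alpha\cap A\bigr),$$
which indeed lands in $F$ because $\supp_C(\alpha)\cap A=\supp_A(\alpha\cap A)$. I will show $\Psi$ is a homeomorphism.

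The algebraic key is the observation that \emph{for every $\mathfrak q\in\spec C$ the inclusion $A\hookrightarrow C$ induces a ring isomorphism $A/(\mathfrak q\cap A)\xrightarrow{\ \sim\ }C/\mathfrak q$}. Indeed $C$ is generated over $A$ by idempotents (every element of $C$ has the shape $\sum_i a_ie_i$ with $a_i\in A$ and $e_i\in E(B)\subset C$), and in the integral domain $C/\mathfrak q$ every idempotent equals $0$ or $1$; hence each such element is congruent mod $\mathfrak q$ to an element of $A$, so the induced map is surjective, and it is plainly injective. Consequently $\qf{C/\mathfrak q}$ and $\qf{A/(\mathfrak q\cap A)}$ are canonically isomorphic, so $\alpha\mapsto\alpha\cap A$ restricts to a bijection from the prime cones of $C$ with support $\mathfrak q$ onto the prime cones of $A$ with support $\mathfrak q\cap A$. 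This yields bijectivity of $\Psi$ immediately: $\alpha$ is recovered from the pair $\bigl(\supp_C\alpha,\alpha\cap A\bigr)$ (injectivity), and a pair $(\mathfrak q,\beta)\in F$ is the image of the ordering of $\qf{C/\mathfrak q}=\qf{A/(\mathfrak q\cap A)}$ determined by $\beta$ (surjectivity).

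Continuity of $\Psi$ is formal, by checking preimages of a subbasis of $F$. For $c\in C$, $\Psi^{-1}\bigl((D_C(c)\times\sper A)\cap F\bigr)=\{\alpha\in\sper C\sep c\notin\supp_C\alpha\}$ is open; and for $a\in A$, $\Psi^{-1}\bigl((\spec C\times\{\beta\sep a>0\ \text{at}\ \beta\})\cap F\bigr)=\{\alpha\sep a>0\ \text{at}\ \alpha\}$ is open. Hence $\Psi$ is continuous.

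The step needing an idea, which I expect to be the main obstacle, is openness, where an element $c=\sum_{i=1}^n a_ie_i\in C$ (with $a_i\in A$, $e_i\in E(B)$) couples several idempotents. To decouple them, introduce the $2^n$ pairwise orthogonal idempotents $e_S:=\prod_{i\in S}e_i\cdot\prod_{i\notin S}(1-e_i)$ for $S\subseteq\{1,\dots,n\}$; then $\sum_S e_S=1$ and $ce_S=a_Se_S$ with $a_S:=\sum_{i\in S}a_i\in A$, so $c=\sum_S a_Se_S$. Since each $e_S$ is idempotent, $\sper C=\bigcupdot_S W_S$ where $W_S:=\{\alpha\sep e_S\notin\supp_C\alpha\}$ is clopen, and on $W_S$ one has $c(\alpha)=a_S(\alpha)$, so ``$c>0$ at $\alpha$'' is equivalent to ``$a_S>0$ at $\alpha\cap A$''. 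Therefore
$$\{\alpha\in\sper C\sep c>0\ \text{at}\ \alpha\}=\bigcupdot_{S\subseteq\{1,\dots,n\}}\bigl(W_S\cap\{\alpha\sep a_S>0\ \text{at}\ \alpha\}\bigr),$$
and each summand is $\Psi^{-1}$ of the open set $\bigl(D_C(e_S)\times\{\beta\sep a_S>0\ \text{at}\ \beta\}\bigr)\cap F$; since $\Psi$ is bijective, its $\Psi$-image is exactly that open set. Thus $\Psi$ sends each subbasic open of $\sper C$ to an open set, and --- using injectivity of $\Psi$ to pass images through finite intersections --- $\Psi$ is open, hence a homeomorphism. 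Beyond this decoupling the argument is just bookkeeping with the standard subbases of $\sper$, $\spec$ and the product topology; no point-set pathology appears because the idempotents in play give clopen (constructible) decompositions.
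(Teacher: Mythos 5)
Your proof is correct, but it takes a genuinely different and noticeably shorter route than the paper's. The paper proves surjectivity (Claim 1), injectivity (Claim 2) and openness (Claim 3) by a three-stage escalation -- first $C=A[e]$ for a single idempotent, then $A[e_1,\dots,e_n]$ by induction, then the general case by writing $C$ as a direct limit and invoking $\sper(\varinjlim)=\varprojlim\sper$ and the pasting lemma for pullbacks -- and it constructs the lifted prime cone explicitly as $\alpha[e\sep e\in E(B)]+\tilde\p$. You instead isolate the single algebraic fact that does all the work: for every $\mathfrak q\in\spec C$ the map $A/(\mathfrak q\cap A)\to C/\mathfrak q$ is an isomorphism, because every element of $C$ is $\sum a_ie_i$ and idempotents collapse to $0$ or $1$ in a domain. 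This kills bijectivity in one stroke (prime cones over a fixed support are orderings of the residue field, and the residue fields agree), and your orthogonal decomposition $c=\sum_S a_Se_S$ with $e_S=\prod_{i\in S}e_i\prod_{i\notin S}(1-e_i)$ handles openness for an arbitrary $c\in C$ directly, with no induction and no limit topology. What the paper's approach buys is the explicit description $\tilde\alpha=\alpha[e\sep e\in E(B)]+\tilde\p$ of the lifted cone, which it reuses in Claim 2; what yours buys is brevity and the avoidance of the limit arguments, whose openness step (Case 3.3) the paper only sketches. All the individual steps in your write-up check out: $ce_S=a_Se_S$, the $W_S$ partition $\sper C$ since exactly one $e_S$ is $1$ modulo each support, the identification $W_S\cap\{a_S>0\}=\Psi^{-1}\bigl((D_C(e_S)\times\{a_S>0\})\cap F\bigr)$, and the passage from subbasic to basic opens via injectivity.
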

\begin{proof}
The proof of this is done by several inductions. First we show the following claim (we shall end the proof of all
claims in the Theorem by a black square, and proof of the Theorem itself is ended by a white square):

\vspace{5mm}
\noindent\underline {Claim 1:} If $\alpha \in \sper A$ and $\tilde p \in \spec C$ be such that 
$\supp_A(\alpha) = \tilde p \cap A$ then there exists an $\tilde \alpha \in \sper C$ such that 
$\supp_C(\tilde \alpha)=\tilde \p$ and $\tilde \alpha \cap A=\alpha$.

\noindent\emph{Proof of Claim 1.} We consider several cases $\dots$ 

\vspace{5mm}\noindent\emph{\textbf{Case 1.1:}}  $C=A[e]$ for some $e\in E(B)$

\noindent Thus in this case we may every now and then regard $C$ as 
$$C\cong Ae\times A(1-e)$$
where $Ae$ and $A(1-e)$ are considered as commutative rings with $e$ respectively $1-e$ as 
their unity (with canonical multiplication and addition, as derived from the ring $A$). Consider now the set 
$$\tilde \alpha = \alpha[e] + \tilde p \subset C$$
we claim  that $\tilde \alpha$ is a prime cone in $C$. Clearly $\tilde \alpha$ is closed under multiplication and addition.
And all the squares in $C$ are in $\tilde \alpha$. Thus it suffices if we prove that 
$\tilde\alpha \cap -\tilde\alpha = \tilde p$ and $\tilde\alpha \cup -\tilde \alpha = C$. 
Clearly $\tilde \p\subset\tilde\alpha \cap -\tilde\alpha$ and $\tilde\alpha \cup -\tilde\alpha\subset C$
so we need only show that $\tilde\alpha \cap -\tilde\alpha \subset \tilde \p$ and 
$C\subset\tilde\alpha \cup -\tilde\alpha$. 
Suppose $c,d \in A$ and consider 
$$c(1-e)+de \in A[e] = C\cong Ae\times A(1-e)$$
also let $a,b,a',b'\in \alpha$ with $x,y\in \tilde p$ such that 
$$ae+b+x=-a'e-b'-y$$
i.e. $ae+b+x\in \tilde \alpha \cap \tilde -\alpha$.  We have two cases $\dots$ 

\vspace{5mm}\noindent\emph{\textbf{Case 1.1.1:}}  $e\in \tilde p$

\noindent Then $b\equiv -b'\mod\tilde \p$ which implies that 
$b+b'\in \tilde p \cap A=\supp_A(\alpha)$. From this and because $b,b'\in \alpha$ we get that $b,b'\in \supp_A(\alpha)\subset \tilde\p$.
Thus we obtain that $ae+b+x \in \tilde p$.

Also, if now $c\in \alpha$ we know that $c+(d-c)e\in \alpha + \tilde \p\subset \alpha[e]+\tilde\p$. And if $c\in -\alpha$ 
(we know that $c\in A=\alpha\cup-\alpha$) we get $c+(d-c)e\in -(\alpha[e]+\tilde \p)$.

\vspace{5mm}\noindent\emph{\textbf{Case 1.1.1:}} $1-e\in \tilde p$

\noindent We claim that $ae+b\in \tilde \p$. We write $(a+a')e + (b+b') = -x-y \in \tilde \p$. Let's set $z=-(x+y)$, we note
that we can write $z$, uniquely, as 
$$z=z_1e + z_2(1-e) \qquad z_1,z_2\in A$$
Because of this unique representation of $z$ (i.e. $C\cong Ae\times A(1-e)$) we get 
$z_2=b+b'$  and $z_1=(a+a'+b+b')$. Now because $z$  and $(1-e)$ are in $\tilde \p$ we conclude that $z_1\in \tilde p$. But since 
$z_1\in A$, we get $z_1\in \tilde\p \cap A=\supp_A(\alpha)$. Now we know that $a,a',b,b'$ are in $\alpha$ and 
$a+a'+b+b'\in \supp_A(\alpha)$ which implies that $a,a',b,b'\in \supp_A(\alpha)\subset \tilde \p$. This shows specifically that
$ae+b\in \tilde \p$.

Now, considering $c(1-e)+de$. If $d\in \alpha$ then 
$$de + c(1-e)\in \alpha[e]+\tilde\p=\tilde \alpha$$
Else if $d\not\in \alpha$, because $A=\alpha\cup-\alpha$ we know that $d\in -\alpha$. So 
$$de + c(1-e)\in -\alpha[e]+\tilde\p=-(\alpha[e]+\tilde\p)=\tilde \alpha$$

\vspace{5mm}\noindent\emph{\textbf{Case 1.2:}} There is an $n\in \N$ such that $C=A[e_1,\dots, e_n]$ for some 
$e_1,\dots,e_n\in E(B)$.

\noindent We can prove this by induction. We have proven the case for $n=1$, so we may as well assume that $n>1$ and 
set $D=A[e_1,\dots,e_{n-1}]$ and assume that in case $C=D$ the Claim holds. In case $C=A[e_1,\dots,e_n]$, we have
$C=D[e_n]$. Assume that $\tilde \p\in \spec C$ and $\alpha\in \sper A$ such that $\tilde\p\cap A=\supp_A(\alpha)$. Write 
$\p'=\tilde \p \cap D$, then we know that $\p'\cap A=\supp_A(\alpha)$ as well. And so by our induction hypothesis there is an
$\alpha'\in \sper D$ such that $\supp_D(\alpha')=\p'=\p\cap D$ and that $\alpha'\cap A=\alpha$. Now using Case 1, we know that 
$\tilde \alpha$ defined by 
$$\tilde \alpha :=\alpha'[e_n]+\tilde \p$$ 
is in $\sper C$ and that $\supp_C(\tilde \alpha)=\tilde \p$ and also $\tilde \alpha \cap D = \alpha'$.
But this only implies that $\tilde \alpha \cap A= \alpha $. Thus we have proven the claim for this case.

\vspace{5mm}\noindent\emph{\textbf{Case 1.3:}}  $C=A[e|e\in E(B)]$  

\noindent This is the case that needs to be proven in general, but we shall make use of the other cases in order to prove this.
Define
$$\mathcal D:=\{A[e_1,\dots, e_n] \sep n\in \N, e_1,\dots,e_n \in E(B)\} $$
then we know that $\mathcal D$ can be considered as a directed set with 
$$\sup(A[e_{1},\dots,e_{n}],A[f_{1},\dots,f_{m}])=A[e_{1},\dots,e_{n},f_{1},\dots,f_{m}]\quad e_1,\dots,e_n,f_1,\dots,f_m \in E(B)$$
Moreover we also know that $\displaystyle\lim_{\stackrel{\longrightarrow}{D\in \mathcal D}} D = C$. 
By \cite{CR} Proposition 2.4, we also know that (in the category of topological spaces)
\begin{equation}\label{sperlim}
\sper C = \sper \lim_{\stackrel{\longrightarrow}{D\in \mathcal D}} D = \lim_{\stackrel{\longleftarrow}{D\in \mathcal D}} \sper D
\end{equation}

Let now $\tilde \p\in \spec C$ and $\alpha\in\sper A$ such that $\p\cap A=\supp(\alpha)$. For each $D\in \mathcal D$ we 
have a canonical injection $D\hookrightarrow C$. So, for each such $D\in \mathcal D$ let us consider 
$\p_D\in \spec D $ defined by $\p_D:=\tilde\p\cap D$. By Case 1.2 we know that for each $D\in\mathcal D$ 
there is an $\alpha_D\in \sper D$ such that $\supp_D(\alpha_D)=\p_D$ and $\alpha_D\cap A=\alpha$. We shall
in particular make use of the $\alpha_D$ as constructed in the proof of Case 1.2. More concretely, if $D=A[e_1,\dots,e_n]$
then $\alpha_D$ is defined by 
$$\alpha_D = \alpha[e_1,\dots,e_n] + \p_D$$

\noindent First we show that
if $D_1,D_2\in \mathcal D$ with $D_1\subset D_2$ then $\alpha_{D_1}\cap D_2 = \alpha_{D_2}$. We may write 
$$D_1 = A[e_1,\dots, e_n], D_2=A[e_2,\dots,e_m] \textrm{ for some } m>n, e_1,\dots,e_m \in E(B)$$
Obviously $\alpha_{D_2}\cap D_1 \supset \alpha_{D_1}$ and 
$$\supp_{D_1}(\alpha_{D_2}\cap D_1) = \p_{D_2}\cap D_1 = \p_{D_1} =\supp_{D_1}(\alpha_{D_1})$$
and by the \emph{basic property of the real spectrum} these all imply 
that $\alpha_{D_2}\cap D_1 = \alpha_{D_1}$ (because the real spectrum has the property that if a prime cone 
is in the closure of a prime cone, i.e. it specializes the other prime cone, then their image under $\supp$ is unequal).

Now by Equation \ref{sperlim} one can consider $\sper C$ as a subspace of $\prod_{D\in\mathcal D} \sper D$. And using the 
above analysis one can easily see that 
$$\{\alpha_D\}_{D\in\mathcal D} \in \sper C$$
We thus define $\tilde\alpha := \{\alpha_D\}_{D\in\mathcal D}$ and we easily see that $\tilde \alpha \cap A=\alpha$. Now we 
also note the fact that, in the category of topological spaces we have (see for instance \cite{EGAIV3} Corollaire 8.2.10)
\begin{equation}\label{speclim}
\spec C = \spec \lim_{\stackrel{\longrightarrow}{D\in \mathcal D}} D = \lim_{\stackrel{\longleftarrow}{D\in \mathcal D}} \spec D
\end{equation}
we then easily see that 
$$\{\p_D\}_{D\in\mathcal D} \in \spec C$$
and that one actually has $\tilde \p =\{\p_D\}_{D\in\mathcal D}$ and therefore $\supp_C(\tilde \alpha)=\tilde\p$. 

Note also that by the construction of projective limits in topological spaces and direct limits the category of porings, 
it is easy to see that $\tilde \alpha$ is no other than 
$$\tilde \alpha := \alpha[e\sep e\in E(B)] + \tilde \p$$
\blacksq

\vspace{5mm}\noindent The above Claim just showed that there is a continous surjection (continuity, due to the universal
property of fiber product in the category of topological space)
$$\phi: \sper C \lthrarrow \spec C \times_{\spec A} \sper A$$

\vspace{5mm}\noindent\underline{Claim 2:} $\phi$ is injective. 

\noindent\emph{Proof of Claim 2.} 
Let $\alpha\in \sper A$, $\tilde \p\in \spec C$ and $\tilde \alpha,\tilde \beta \in \sper C$ such that 
$$\supp(\tilde \alpha)=\supp(\tilde \beta)=\tilde \p \quad\textrm{ and }\quad \tilde \alpha\cap A=\tilde \beta\cap A = \alpha$$
then
$$ \tilde \alpha ,\tilde \beta \supset \alpha[e\sep e\in E(B)] + \tilde \p$$ 
and that by Claim 1 we also know that $\alpha[e\sep e\in E(B)] + \tilde \p$ is a prime cone of $C$ with image under $\supp_C$
being $\tilde \p$. But 
by the basic property of real spectra this proves us that 
$$\tilde \alpha = \tilde \beta = \alpha[e\sep e\in E(B)] + \tilde \p$$
and so $\phi$ is indeed injective. 
\blacksq

\vspace{5mm}\noindent\underline{Claim 3:} $\phi$ is open and thus a homeomorphism.

\noindent\emph{Proof of Claim 3.} It suffices to prove that for any 
$n\in \N$ and $b_1,\dots, b_n\in C$, one has that $\phi(P_C(b_1,\dots,b_n))$ is open (i.e. the image of any basic
open set is open). 
Again we work with different cases $\dots$ 

\vspace{5mm}\noindent\emph{\textbf{Case 3.1:}} $C=A[e]$ for some $e\in E(B)$

\noindent Define 
$$X:=\phi(\sper C)=\spec C\times_{\spec A}\sper A$$ 
then we claim that for any $n\in \N$ and $b_1,\dots,b_n\in C$ one has 
$$\phi(P_C(b_1,\dots,b_n)) =X\cap (D_C(e)\times P_A(b_{1,1},\dots,b_{1,n})\cup D_C(1-e)\times P_A(b_{2,1},\dots,b_{2,n}))$$
where $b_i=b_{1,i}e +b_{2,i}(1-e)$ (as explained in Case 1.1, this is the unique representation of $b_i\in C\cong Ae\times A(1-e)$).

"$\subset$" Let $\tilde \alpha\in P_C(b_1,\dots,b_n)$ with $\tilde p\in \spec C, \alpha\in\sper A$ such that 
$\phi(\tilde \alpha)=(\tilde \p,\alpha)$. Consider the case where $e\not\in\tilde\p$. If for an $i\in\{1,\dots,n\}$ one has 
$b_{1,i} \in -\alpha$, then $b_{1,i}e \in -\tilde \alpha$ (because $e\in\tilde\alpha$ for any $e\in E(C)$ and 
$\tilde\alpha\in\sper C$). 
But we know $b_i\in \tilde \alpha\backslash\supp_C(\tilde\alpha)$ with $1-e\in \supp_C(\tilde \alpha)$, so 
$$b_{1,i}e\in\tilde\alpha\Rightarrow b_{1,i}e\in\supp_C(\tilde \alpha)=\tilde \p\Rightarrow b_i=b_{1,i}e +b_{2,i}(1-e)\in \supp_C(\tilde\alpha)$$
And this is a contradiction! Thus for $e\not\in \tilde \p$ one has that 
$b_{1,i}\in \alpha\backslash\supp_A(\alpha)$ for all $i\in \{1,\dots,n\}$ (since $A=\alpha\cup -\alpha$), or in other 
words for $e\not\in\tilde \p$ we get 
$$(\tilde \p,\alpha)\in D_C(e)\times P_A(b_{1,1},\dots,b_{1,n})$$
Similarly one proves that for the case that $e\in \tilde \p$ one gets 
$$(\tilde \p,\alpha)\in D_C(1-e)\times P_A(b_{2,1},\dots,b_{2,n})$$

"$\supset$" Let 
$$(\tilde \p,\alpha)\in X\cap(D_C(e)\times P_A(b_{1,1},\dots,b_{1,n}))$$
we know by Claim 2 and Claim 1 that there exists a unique $\tilde \alpha \in \sper C$ such that 
$\phi(\tilde\alpha) = (\tilde \p,\alpha)$. Since $e\not \in \tilde p$ one has that $1-e\in \tilde p$ so 
for all $i\in \{1,\dots,n\}$ one gets 
$$b_i=b_{1,i}e + b_{2,i}(1-e) \in \tilde \alpha$$ 
because $\tilde\p=\supp_C(\tilde \alpha)$ and because 
$b_{1,i}\in \alpha\subset\tilde\alpha$ 
and so
$b_{1,i}e\in\tilde\alpha$. 

Now if $b_i\in \supp_C(\tilde\alpha)=\tilde\p$ we have 
$$b_{1,i}e \in \tilde \p\Rightarrow b_{1,i}\in \tilde\p\Rightarrow b_{1,i}\in \tilde\p\cap A=\supp_A(\alpha)$$
but this is a contradiction since we know from begining that 
$\alpha\in P_A(b_{1,1},\dots,b_{1,n})$ which implies that $b_{1,i}\in \alpha \backslash \supp_A(\alpha)$. Thus in general
we have that for any $i\in \{1,\dots,n\}$ 
$$b_i\in \tilde \alpha\backslash \supp_C(\tilde\alpha)$$
And so we can conclude that $\tilde\alpha\in P_C(b_1,\dots,b_n)$.

Similary one proves for the case
$$(\tilde \p,\alpha)\in X\cap(D_C(1-e)\times P_A(b_{2,1},\dots,b_{2,n}))$$
one gets $\tilde \alpha \in P_C(b_1,\dots,b_n)$, where $\tilde \alpha$ is the unique element in $\sper C$ such that 
$\phi(\tilde \alpha)= (\tilde \p,\alpha)$.

\vspace{5mm}\noindent
Thus for this Case we have
proven that $\phi$ is open (and thus a homeomorphism).

\vspace{5mm}\noindent\emph{\textbf{Case 3.2:}} There is an $m\in \N$ such that $C=A[e_1,\dots, e_m]$ for some 
$e_1,\dots,e_m\in E(B)$.

We prove this by induction over $m$. We know that this is true for the case $m=1$ (Case 3.1), thus we assume $m\geq 2$. 
For simplicity, define $C_{m-1} := A[e_1,\dots, e_{m-1}]$. By Case 3.1 and induction hypothesis 
we know that the commutative diagrams 
$$
\begindc{\commdiag}[1]
\obj(0,50)[A]{$\sper C_{m-1}$}
\obj(100,50)[B]{$\spec C_{m-1}$}
\obj(0,0)[C]{$\sper A$}
\obj(100,0)[D]{$\spec A$}
\mor{A}{B}{}
\mor{B}{D}{}
\mor{A}{C}{}
\mor{C}{D}{}
\enddc
$$
and 
$$
\begindc{\commdiag}[1]
\obj(0,50)[A]{$\sper C$}
\obj(100,50)[B]{$\spec C$}
\obj(0,0)[C]{$\sper C_{m-1}$}
\obj(100,0)[D]{$\spec C_{m-1}$}
\mor{A}{B}{}
\mor{B}{D}{}
\mor{A}{C}{}
\mor{C}{D}{}
\enddc
$$
are pullbacks in the category of topological spaces. It is a known fact in category theory that "pasting" the two 
commutative rectangle will give us the following commutative diagram
$$
\begindc{\commdiag}[1]
\obj(0,100)[A]{$\sper C$}
\obj(100,100)[B]{$\spec C$}
\obj(0,50)[C]{$\sper C_{m-1}$}
\obj(100,50)[D]{$\spec C_{m-1}$}
\obj(0,0)[E]{$\sper A$}
\obj(100,0)[F]{$\spec A$}
\mor{A}{B}{}
\mor{B}{D}{}
\mor{A}{C}{}
\mor{C}{D}{}
\mor{C}{E}{}
\mor{E}{F}{}
\mor{D}{F}{}
\enddc
$$
whose outer rectangle is also a pullback in the category of topological spaces (see for instance \cite{AHS} 
Proposition 11.10). And this proves that $\phi$ is also a homeomorphism for this case.

\vspace{5mm}\noindent\emph{\textbf{Case 3.3:}}  $C=A[e|e\in E(B)]$  

\noindent This is the general case and the openness of $\phi$ 
is easily seen by the general definition of limit topology and noting Equations
(\ref{sperlim}) and (\ref{speclim}) in Case 1.3.
\blacksq
With this we have also proven the Theorem.
\end{proof}

\begin{ack}
I would like to thank Oliver Delzeith for his most valuable insights and mathematical discussions with me. In times of need
and confusion I have constantly relied on him. 
\end{ack}

\end{document}